\documentclass[a4paper,12pt]{amsart}
\usepackage{amssymb}
\usepackage[T1]{fontenc}
\usepackage{amsfonts}
\usepackage[utf8]{inputenc}
\usepackage{amsmath}
\usepackage{amsthm}
\usepackage{hyperref}

\numberwithin{equation}{section}

\newtheorem{theorem}{Theorem}[section]
\newtheorem{lemma}[theorem]{Lemma}
\newtheorem{corollary}[theorem]{Corollary}

\theoremstyle{definition}
\newtheorem{definition}[theorem]{Definition}

\newtheorem{remark}[theorem]{Remark}

\renewcommand{\epsilon}{\varepsilon}

\renewcommand{\c}{\mathcal{C}}

\newcommand{\A}{\mathcal{A}}
\newcommand{\C}{\mathbb{C}}
\newcommand{\F}{\mathcal{F}}
\newcommand{\M}{\operatorname{M}}
\newcommand{\N}{\mathbb{N}}

\newcommand{\R}{\mathbb{R}}
\newcommand{\X}{\mathcal{X}}

\newcommand{\id}{\operatorname{id}}
\newcommand{\1}{{1}}
\newcommand{\GL}{\operatorname{GL}}
\renewcommand{\Im}{\operatorname{Im}}

\begin{document}

\title[Free Berry-Esseen theorems]{Operator-valued and multivariate free Berry-Esseen theorems}

\author[T. Mai and R. Speicher]{Tobias Mai$^*$ and Roland Speicher$^\dagger$}

\thanks{$^*\,$Research supported by funds of Roland Speicher from the Alfried Krupp von Bohlen und Halbach-Stiftung}

\thanks{$^\dagger\,$Research partially supported by a Discovery grant from NSERC (Canada) and by
a Killam Fellowship from the Canada Council for the Arts}

\address{Saarland University, Fachbereich Mathematik, Postfach 151150,
66041 Saarbr\"ucken, Germany}

\dedicatory{ Dedicated to Professor Friedrich G\"otze\\ on the occassion of his
60th birthday}

\maketitle

\begin{abstract}
We address the question of a Berry-Esseen type theorem for the speed of convergence in a multivariate free central limit theorem. For this, we estimate the difference between the operator-valued Cauchy transforms of the normalized partial sums in an operator-valued free central limit theorem and the Cauchy transform of the limiting operator-valued semicircular element. Since we have to deal with in general non-self-adjoint operators, we introduce the notion of matrix-valued resolvent sets and study the behavior of Cauchy transforms on them.
\end{abstract}



\section{Introduction}

In classical probability theory the famous Berry-Esseen theorem gives a quantitative statement about the order of convergence in the central limit theorem. It states in its simplest version: If $(X_i)_{i\in\N}$ is a sequence of independent and identically distributed random variables with mean $0$ and variance $1$, then the distance between $S_n := \frac{1}{\sqrt{n}}(X_1 + \dots + X_n)$ and a normal variable $\gamma$ of mean $0$ and variance $1$ can be estimated in terms of the Kolmogorov distance $\Delta$ by
$$\Delta(S_n,\gamma) \leq C \frac{1}{\sqrt{n}} \rho,$$
where $C$ is a constant and $\rho$ is the absolute third moment of the variables $X_i$. The question for a free analogue of the Berry-Esseen estimate in the case of one random variable was answered by Christyakov and G\"otze in \cite{Goetze} (and independently, under the more restrictive assumption of compact support of the $X_i$, by Kargin \cite{Kargin}): If $(X_i)_{i\in\N}$ is a sequence of free and identically distributed variables with mean $0$ and variance $1$, then the distance between $S_n := \frac{1}{\sqrt{n}}(X_1 + \dots + X_n)$ and a semicircular variable $s$ of mean $0$ and variance $1$ can be estimated as
$$\Delta(S_n,s) \leq c \frac{|m_3|+\sqrt{m_4}}{\sqrt{n}},$$ 
where $c>0$ is an absolute constant and $m_3$ and $m_4$ are the third and fourth moment, respectively, of the $X_i$.

In this paper we want to present an approach to a multivariate version of a free Berry-Esseen theorem. The general idea is the following: Since there is up to now no suitable replacement of the Kolmorgorov metric in the multivariate case, we will, in order to describe the speed of convergence of a $d$-tuple $(S_n^{(1)},\dots,S_n^{(d)})$ of partial sums to the limiting semicircular family $(s_1,\dots,s_d)$, consider the speed of convergence of $p(S_n^{(1)},\dots,S_n^{(d)})$ to $p(s_1,\dots,s_d)$ for any self-adjoint polynomial $p$ in $d$ non-commuting variables. By using the linearization trick of Haagerup and Thorbj\o rnsen \cite{Haagerup1, Haagerup2}, we can reformulate this in an operator-valued setting, where we will state an operator-valued free Berry-Esseen theorem. Because estimates for the difference between scalar-valued Cauchy transforms translate by results of Bai \cite{Bai} to estimates with respect to the Kolmogorov distance, it is convenient to describe the speed of convergence in terms of Cauchy transforms.
On the level of deriving equations for the (operator-valued) Cauchy
transforms we can follow ideas which are used for dealing with speed of convergence
questions for random matrices; here we are inspired in particular by the work of G\"otze
and Tikhomirov \cite{GT}, but see also \cite{Bai}.

Since the transition from the multivariate to the operator-valued setting leads to operators which are, even if we start from selfadjoint polynomials $p$, in general not self-adjoint, we have to deal with (operator-valued) Cauchy transforms defined on domains different from the usual ones. Since most of the analytic tools fail in this generality, we have to develop them along the way.

As a first step in this direction, the present paper (which is based on the unpublished preprint \cite{Speicher5}) leads finally to the proof of the following theorem:

\begin{theorem}\label{Berry-Esseen}
Let $(\c,\tau)$ be a non-commutative $C^\ast$-probability space with $\tau$ faithful and put $\A:=\M_m(\C)\otimes\c$ and $E:=\id\otimes\tau$. Let $(X_i)_{i\in\N}$ be a sequence of non-zero elements in the operator-valued probability space $(\A,E)$. We assume:
\begin{itemize}
 \item All $X_i$'s have the same $\ast$-distribution with respect to $E$ and their first moments vanish, i.e. $E[X_i]=0$.
 \item The $X_i$ are $*$-free with amalgamation over $\M_m(\C)$ (which means that the $\ast$-algebras $\X_i$, generated by $\M_m(\C)$ and $X_i$, are free with respect to $E$).
 \item We have $\displaystyle{\sup_{i\in\N} \|X_i\| < \infty}$.
\end{itemize}
Then the sequence $(S_n)_{n\in\N}$ defined by
$$S_n := \frac{1}{\sqrt{n}} \sum^n_{i=1} X_i,\qquad n\in\N$$
converges to an operator-valued semicircular element $s$ . Moreover, we can find $\kappa > 0$, $c>1$, $C>0$ and $N\in\N$ such that
$$\|{G}_{s}(b) - {G}_{S_n}(b)\| \leq C \frac{1}{\sqrt{n}}\|b\| \qquad\text{for all $b\in \Omega$ and $n\geq N$},$$
where
$$\Omega := \Big\{b\in\GL_m(\C)\mid \|b^{-1}\| < \kappa,\ \|b\|\cdot\|b^{-1}\| < c\Big\}$$
and where $G_s$  and $G_{S_n}$ denote the operator-valued Cauchy transforms of $s$ and of $S_n$, respectively.
\end{theorem}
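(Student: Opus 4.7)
The plan is to adapt the G\"otze--Tikhomirov resolvent-comparison strategy to the operator-valued free setting. The algebraic starting point is
$$b(b-S_n)^{-1} \;=\; 1 + S_n(b-S_n)^{-1},$$
which, after applying $E$ and expanding $S_n$, yields
$$b\,G_{S_n}(b) \;=\; 1 \;+\; \frac{1}{\sqrt n}\sum_{i=1}^n E\bigl[X_i\,(b-S_n)^{-1}\bigr].$$
For each $i$, I would isolate $X_i$ by setting $S_n^{(i)} := S_n - X_i/\sqrt n$ and iterating the resolvent identity $(b-S_n)^{-1} = (b-S_n^{(i)})^{-1} + \tfrac{1}{\sqrt n}(b-S_n^{(i)})^{-1} X_i (b-S_n)^{-1}$ three times to produce a Taylor-type expansion
$$(b-S_n)^{-1} \;=\; \sum_{k=0}^{2} n^{-k/2}\bigl[(b-S_n^{(i)})^{-1} X_i\bigr]^k (b-S_n^{(i)})^{-1} \;+\; R_i,$$
with a cubic remainder $R_i$ of norm $O(n^{-3/2})$ times a polynomial in $\|X_i\|$ and the relevant resolvent norms.

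Since $X_i$ is $\ast$-free from the $\ast$-algebra generated by $\M_m(\C)$ and $S_n^{(i)}$ and $E[X_i]=0$, the $k=0$ contribution vanishes and the $k=1$ contribution evaluates exactly to $\eta(G_{S_n^{(i)}}(b))\,G_{S_n^{(i)}}(b)$, where $\eta: b \mapsto E[X_1 b X_1]$ is the operator-valued variance map of the common distribution. The $k=2$ contribution involves the third $\M_m(\C)$-valued moment of $X_i$, carries an extra factor $n^{-3/2}$ per summand and so contributes $O(n^{-1/2})$ in total; the remainder $R_i$ contributes only $O(n^{-1})$. A final resolvent step (again using $E[X_i]=0$) replaces each $G_{S_n^{(i)}}$ by $G_{S_n}$ at cost $O(n^{-1})$, leaving an approximate operator-valued Schwinger--Dyson equation
$$b\,G_{S_n}(b) \;=\; 1 + \eta\bigl(G_{S_n}(b)\bigr)\,G_{S_n}(b) + \mathcal{E}_n(b), \qquad \|\mathcal{E}_n(b)\| \leq \tfrac{C}{\sqrt n}\,P\bigl(\|b^{-1}\|\bigr),$$
for a polynomial $P$ depending only on $\sup_i\|X_i\|$. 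The limit $s$ satisfies the exact version $b\,G_s(b) = 1 + \eta(G_s(b))\,G_s(b)$, so writing $\Delta_n := G_s - G_{S_n}$ and using $G_s^{-1} = b - \eta(G_s)$ a short rearrangement gives
$$\Delta_n \;=\; G_s\,\eta(\Delta_n)\,G_{S_n} \;-\; G_s\,\mathcal{E}_n(b),$$
so that $(1 - \|G_s\|\,\|\eta\|\,\|G_{S_n}\|)\,\|\Delta_n\| \leq \|G_s\|\,\|\mathcal{E}_n\|$.

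Closing the bound reduces to showing $\|G_s(b)\|, \|G_{S_n}(b)\| \lesssim \|b^{-1}\|$ uniformly in $n$ on $\Omega$: once $\kappa$ is small enough, the contraction factor $\|\eta\|\cdot O(\|b^{-1}\|^2)$ stays strictly below $1$ there, and the advertised estimate $\|\Delta_n(b)\| \leq C n^{-1/2}\|b\|$ on $\Omega$ (for $n$ large enough) follows. The main obstacle is precisely producing these resolvent bounds. Neither $s$ nor any $S_n$ is self-adjoint in general, so one cannot force $(b-S_n)^{-1}$ and $(b-s)^{-1}$ to exist simply by taking $b$ in a half-plane; instead one must invoke the matrix-valued resolvent set machinery developed earlier in the paper to show both that $\Omega$ is contained in the resolvent set of $s$ and, uniformly in $n$, of every $S_n$, and to produce quantitative norm bounds on the various resolvents $(b-S_n)^{-1}$, $(b-S_n^{(i)})^{-1}$, $(b-s)^{-1}$ appearing in the expansion. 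The peculiar double restriction in the definition of $\Omega$, requiring both $\|b^{-1}\|$ small and the condition number $\|b\|\cdot\|b^{-1}\|$ bounded, is exactly what is needed to make these bounds uniform across all terms of the expansion and to absorb the polynomial $P$.
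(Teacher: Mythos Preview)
Your resolvent expansion and freeness computations are essentially the paper's: both start from $bG_{S_n}(b)=1+E[S_nR_n(b)]$, remove $X_i$ via the leave-one-out resolvent $S_n^{[i]}$, use $E[X_i]=0$ together with freeness to kill the linear term and evaluate the quadratic term as $\eta(G_n^{[i]})G_n^{[i]}$, and then replace $G_n^{[i]}$ by $G_n$. (Your extra iteration to isolate a genuine ``third-moment'' term and your observation that $E[R_n^{[i]}X_iR_n^{[i]}]=0$ sharpens $G_n^{[i]}-G_n$ to $O(n^{-1})$ are correct refinements, but not needed for the final $O(n^{-1/2})$.)

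Where you genuinely diverge is in the closing step. The paper does \emph{not} subtract the two Schwinger--Dyson equations. Instead it rewrites the approximate equation as
\[
\Lambda_n(b)\,G_{S_n}(b)=1+\eta\bigl(G_{S_n}(b)\bigr)G_{S_n}(b),\qquad \Lambda_n(b):=b-\Theta_n(b)\,G_{S_n}(b)^{-1},
\]
shows that $\Lambda_n$ maps a slightly smaller $\Omega^\ast$ into $\Omega$, and then invokes the Earle--Hamilton fixed-point theorem (Corollary~3.8) to conclude that the Schwinger--Dyson equation has a \emph{unique} solution in $\Omega'$, forcing $G_{S_n}(b)=G_s(\Lambda_n(b))$. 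The difference $G_s(b)-G_s(\Lambda_n(b))$ is then bounded by a single resolvent identity. Your route---subtract, use $G_s^{-1}=b-\eta(G_s)$, and solve the resulting linear equation $\Delta_n=G_s\,\eta(\Delta_n)\,G_{S_n}-G_s\,\mathcal E_n$ by a norm contraction---is more elementary: it avoids Earle--Hamilton entirely and yields a bound $\|\Delta_n(b)\|\lesssim \|b^{-1}\|n^{-1/2}$, which on $\Omega$ is actually stronger than the stated $\|b\|n^{-1/2}$. In fact your argument never seems to use the condition-number restriction $\|b\|\,\|b^{-1}\|<c$; the paper needs it precisely to control $\|G_{S_n}(b)^{-1}\|$ in the definition of $\Lambda_n$. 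What the paper's route buys is the structural identity $G_{S_n}(b)=G_s(\Lambda_n(b))$, which is of independent interest, and a framework that handles uniqueness of the limiting equation cleanly.
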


Applying this operator-valued statement to our multivariate problem gives the following main result on a multivariate free Berry Esseen theorem.

\begin{theorem}\label{Berry-Esseen-multivariate}
Let $(x_i^{(k)})_{k=1}^d$, $i\in\N$, be free and identically distributed sets of $d$ self-adjoint non-zero random variables in some non-commutative $C^\ast$-probability space $(\c,\tau)$, with $\tau$ faithful, such that the conditions
$$\tau(x_i^{(k)}) = 0 \qquad\text{for $k=1,\dots,d$ and all $i\in\N$}$$
and
$$\sup_{i\in\N} \max_{k=1,\dots,d} \|x_i^{(k)}\| < \infty$$
are fulfilled. We denote by $\Sigma =(\sigma_{k,l})_{k,l=1}^d$, where $\sigma_{k,l} := \tau(x_i^{(k)} x_i^{(l)})$, their joint covariance matrix. Moreover, we put
$$S_n^{(k)} := \frac{1}{\sqrt{n}} \sum^n_{i=1} x_i^{(k)} \qquad\text{for $k=1,\dots,d$ and all $n\in\N$}.$$
Then  $(S_n^{(1)},\dots,S_n^{(d)})$ converges in distribution to a semicircular family $(s_1,\dots,s_d)$ of covariance $\Sigma$. We can quantify the speed of convergence in the following way. Let $p$ be  a (not necessarily self-adjoint) polynomial in $d$ non-commutating variables and put
$$P_n := p(S^{(1)}_n,\dots,S^{(d)}_n) \qquad\text{and}\qquad P := p(s_1,\dots,s_d).$$
Then, there are constants $C>0$, $R>0$ and $N\in\N$ (depending on the polynomial) such that
$$|G_P(z) - G_{P_n}(z)| \leq C \frac{1}{\sqrt{n}} \qquad\text{for all $|z|>R$ and $n\geq N$},$$
where $G_P$ and $G_{P_n}$ denote the scalar-valued Cauchy transform of 
$P$ and of $P_n$, respectively.
\end{theorem}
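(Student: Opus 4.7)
The plan is to reduce Theorem~\ref{Berry-Esseen-multivariate} to the operator-valued Theorem~\ref{Berry-Esseen} by means of the Haagerup--Thorbj\o rnsen linearization trick, extracting the scalar Cauchy transforms as $(1,1)$-entries of operator-valued resolvents via the Schur complement.

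First, I would linearize $p$: produce $m\in\N$, matrices $\gamma_0,\gamma_1,\dots,\gamma_d\in\M_m(\C)$, and the rank-one matrix $\Lambda:=E_{1,1}\in\M_m(\C)$ such that for any $d$-tuple $y=(y_1,\dots,y_d)$ in a unital algebra and any $z\in\C$ for which $z\Lambda-\gamma_0-\sum_k\gamma_k\otimes y_k$ is invertible,
\begin{equation*}
\bigl[(z\Lambda-\gamma_0-\textstyle\sum_k\gamma_k\otimes y_k)^{-1}\bigr]_{1,1}=(z-p(y))^{-1}.
\end{equation*}
Next I would set $Y_i:=\sum_k\gamma_k\otimes x_i^{(k)}\in\A:=\M_m(\C)\otimes\c$. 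The $Y_i$ are identically distributed over $\M_m(\C)$, satisfy $E[Y_i]=0$ (using $\tau(x_i^{(k)})=0$ and $E=\id\otimes\tau$), are $\ast$-free with amalgamation over $\M_m(\C)$ (scalar freeness of the sets $\{x_i^{(k)}\}_k$ lifts to freeness of the generated subalgebras $\M_m(\C)\langle Y_i\rangle$ over $\M_m(\C)$ by a standard argument), and uniformly bounded. Writing $\tilde S_n:=n^{-1/2}\sum_{i=1}^n Y_i$ with operator-valued semicircular limit $\tilde s$, Theorem~\ref{Berry-Esseen} then supplies constants $\kappa,c,C_0,N_0$ and the admissible set $\Omega$ with $\|G_{\tilde s}(b)-G_{\tilde S_n}(b)\|\le C_0\|b\|/\sqrt n$ for $b\in\Omega$ and $n\ge N_0$. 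Since $\gamma_0+\sum_k\gamma_k\otimes S_n^{(k)}=\gamma_0+\tilde S_n$ and likewise for $s$, combining this with the linearization identity and taking the $(1,1)$-entry gives, for $b(z):=z\Lambda-\gamma_0$ with $b(z)\in\Omega$,
\begin{equation*}
|G_P(z)-G_{P_n}(z)|\le\|G_{\tilde s}(b(z))-G_{\tilde S_n}(b(z))\|\le C_0\|b(z)\|/\sqrt n.
\end{equation*}

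The hard part will be to extend this bound to all $|z|>R$ as required by the theorem. Membership $b(z)=zE_{1,1}-\gamma_0\in\Omega$ demands a well-conditioned $b(z)$, but $b(z)$ is a rank-one perturbation of a fixed matrix, so its condition number grows linearly with $|z|$; the direct estimate above is therefore only available on a bounded annular range of $z$. I would get around this using holomorphy: $f(z):=G_P(z)-G_{P_n}(z)$ is holomorphic on $\{|z|>\rho\}$, where $\rho:=\max(\|P\|,\sup_{n\ge N_0}\|P_n\|)<\infty$, and vanishes at infinity. Choose $R_0$ inside the admissible annulus (arranging the linearization so that $b(R_0e^{i\theta})\in\Omega$ for every $\theta$) and larger than $\rho$; the estimate of the previous paragraph then gives $|f|\le\widetilde C/\sqrt n$ on the circle $|z|=R_0$, and applying the maximum modulus principle to the holomorphic extension of $w\mapsto f(1/w)$ across $w=0$ on $\{|w|\le 1/R_0\}$ propagates this bound to every $|z|\ge R_0$. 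Setting $R:=R_0$, $N:=N_0$, $C:=\widetilde C$, each depending on $p$ through the linearization data, completes the argument.
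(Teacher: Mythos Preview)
Your outline follows the paper's strategy exactly: linearize via Haagerup--Thorbj\o rnsen, apply the operator-valued Theorem~\ref{Berry-Esseen}, read off the $(1,1)$-entry to recover scalar Cauchy transforms, and finish with the maximum modulus principle applied to $w\mapsto f(1/w)$. The last step is precisely what the paper does as well.

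The gap is in the parenthetical ``arranging the linearization so that $b(R_0e^{i\theta})\in\Omega$ for every $\theta$.'' You correctly identify the obstacle, but you do not say \emph{how} to arrange this, and in fact for the standard linearization it typically cannot be done at all. With $b(z)=\operatorname{diag}(z,1,\dots,1)$ (the paper's $\Lambda(z,1)$, i.e.\ your $\gamma_0$ has lower-right block $-I_{m-1}$) one has $\|b(z)^{-1}\|\geq 1$ for every $z$, so as soon as $\kappa<1$ --- the generic situation, since $\kappa$ is of order $\theta/\|Y_i\|$ with $\theta<1$ --- the admissible set $\{z:b(z)\in\Omega\}$ is \emph{empty}. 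Rescaling the linearization by a diagonal matrix does not help: it rescales $\|Y_i\|$ and hence $\kappa$ by the same factor as $\|b(z)^{-1}\|$, so the obstruction persists. Your ``bounded annular range'' is therefore not known to be non-empty, and you also need it to lie beyond $\rho$, which imposes a second constraint you have no mechanism to meet.

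The paper's resolution is to keep $\tilde S_n$ (hence $\Omega$) fixed but evaluate its Cauchy transform at a \emph{two}-parameter family $\Lambda(\lambda,\mu):=\operatorname{diag}(\lambda,\mu,\dots,\mu)$. For any $|\mu|>1/\kappa$ one checks directly (Lemma~\ref{annulus}) that $\Lambda(\lambda,\mu)\in\Omega$ whenever $\max\{1/\kappa,|\mu|/c\}<|\lambda|<c|\mu|$. The key observation is that by the homogeneity of the HT linearization, $\Lambda(\lambda,\mu)-\tilde S_n$ is (up to a scalar) the linearization of the rescaled polynomial $\mu^{-(g-1)}p$ at the \emph{same} operators, so that $\pi\bigl(G_{\tilde S_n}(\Lambda(\lambda,\mu))\bigr)=\mu^{g-1}G_{P_n}(\lambda\mu^{g-1})$. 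This yields the scalar estimate on an annulus in $z=\lambda\mu^{g-1}$ which, by taking $|\mu|$ large, can be placed as far out as one wishes --- in particular beyond $\rho$ --- after which your maximum-modulus argument applies verbatim.
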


In order to deduce estimates for the Kolmogorov distance $\Delta(P_n,P)$  one has to transfer the estimate for the difference of the scalar-valued Cauchy transforms of $P_n$ and $P$ from near infinity to a neigborhood of the real axis. A partial solution to this problem was given in the appendix of \cite{Speicher4}, which we will recall in Section 4. But this leads to the still unsolved question, whether $p(s_1,\dots,s_d)$ has a continuous density. We conjecture that the latter is true for any selfadjoint polynomial in free semicirculars, but at present we are not aware of a proof of that statement.

The paper is organized as follows. In Section 2 we recall some basic facts about holomorphic functions on domains in Banach spaces. The tools to deal with matrix-valued Cauchy transform will be presented in Section 3. Section 4 is devoted to the proof of Theorem \ref{Berry-Esseen}  and Theorem \ref{Berry-Esseen-multivariate}.

\section{Holomorphic functions on domains in Banach spaces}

For reader's convenience, we briefly recall the definition of holomorphic functions on domains in Banach spaces and we state the theorem of Earle-Hamilton, which will play a major role in the subsequent sections.

\begin{definition}
Let $(X,\|\cdot\|_X)$, $(Y,\|\cdot\|_Y)$ be two complex Banach spaces and let $D\subseteq X$ be an open subset of $X$. A function $f: D\rightarrow Y$ is called
\begin{itemize}
 \item \textbf{strongly holomorphic}, if for each $x\in D$ there exists a bounded linear mapping $Df(x): X\rightarrow Y$ such that $$\lim_{y\rightarrow 0} \frac{\|f(x+y)-f(x)-Df(x)y\|_Y}{\|y\|_X} = 0.$$
 \item \textbf{weakly holomorphic}, if it is locally bounded and the mapping $$\lambda\mapsto \phi(f(x+\lambda y))$$ is holomorphic at $\lambda=0$ for each $x\in D$, $y\in Y$ and all continuous linear functionals $\phi: Y\rightarrow\C$.
\end{itemize}
\end{definition}

An important theorem due to Dunford says, that a function on a domain (i.e. an open and connected subset) in a Banach space is strongly holomorphic if and only if it is weakly holomorphic. Hence, we do not have to distinguish between both definitions.

\begin{definition}
Let $D$ be a nonempty domain in a complex Banach space $(X,\|\cdot\|)$ and let $f: D\rightarrow D$ be a holomorphic function. We say, that \textbf{$f(D)$ lies strictly inside $D$}, if there is some $\epsilon>0$ such that
$$B_\epsilon(f(x)) \subseteq D \qquad\text{for all $x\in D$}$$
holds, whereby we denote by $B_r(y)$ the open ball with radius $r$ around $y$.
\end{definition}

The remarkable fact, that strict holomorphic mappings are strict contractions in the so-called Carath\'eodory-Riffen-Finsler metric, leads to the following theorem of Earle-Hamilton (cf. \cite{Earle}), which can be seen as a holomorphic version of Banach's contraction mapping theorem. For a proof of this theorem and variations of the statement we refer to \cite{Harris}.

\begin{theorem}[Earle-Hamilton, 1970]\label{Earle-Hamilton}
Let $\emptyset\neq D\subseteq X$ be a domain in a Banach space $(X,\|\cdot\|)$ and let $f: D\rightarrow D$ be a bounded holomorphic function. If $f(D)$ lies strictly inside $D$, then $f$ has a unique fixed point in $D$.
\end{theorem}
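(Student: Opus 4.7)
My plan is to deduce the theorem from an infinite-dimensional Schwarz-Pick inequality via the Carath\'eodory-Riffen-Finsler (CRF) pseudometric, and then run a Banach-style fixed point argument in that intrinsic metric.

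First, I would set up the CRF pseudometric. For $x \in D$ and $v \in X$ define
$$\alpha_D(x;v) := \sup\bigl\{|dh(x)v| : h \colon D \to \mathbb{D} \text{ holomorphic}\bigr\},$$
where $\mathbb{D}$ is the open unit disk in $\C$, and let $d_D(x,y)$ be the infimum of $\int_0^1 \alpha_D(\gamma(t);\gamma'(t))\,dt$ over smooth paths $\gamma \colon [0,1] \to D$ from $x$ to $y$. The classical one-dimensional Schwarz-Pick lemma, applied to $h \circ g \circ \varphi \colon \mathbb{D} \to \mathbb{D}$ for holomorphic $\varphi \colon \mathbb{D} \to D_1$ and $h \colon D_2 \to \mathbb{D}$, shows that any holomorphic $g \colon D_1 \to D_2$ is non-expansive, i.e. $d_{D_2}(g(x),g(y)) \leq d_{D_1}(x,y)$. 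On bounded subsets of $D$ that stay a positive norm-distance from $\partial D$, the pseudometric $d_D$ is a genuine complete metric comparable to the norm metric; I would verify this via the upper bound $\alpha_D(x;v) \leq \|v\|/\mathrm{dist}(x, \partial D)$ coming from inscribed one-dimensional disks, together with a matching lower bound $\alpha_D(x;v) \geq \|v\|/(2 \operatorname{diam} f(D))$ produced from bounded linear functionals on $X$.

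Second, I would promote non-expansiveness into a strict contraction. Pick $\epsilon > 0$ with $B_\epsilon(f(x)) \subseteq D$ for all $x \in D$ and form the intermediate domain $\tilde D := f(D) + B_{\epsilon/2}(0)$, which satisfies $f(D) \subseteq \tilde D \subseteq D$. The factorization $f \colon D \to \tilde D \hookrightarrow D$ combined with the first step yields both $d_{\tilde D}(f(x),f(y)) \leq d_D(x,y)$ and $d_D(p,q) \leq d_{\tilde D}(p,q)$ for $p,q \in \tilde D$. The decisive point is the uniform improvement $d_D \leq k\, d_{\tilde D}$ on $\tilde D \times \tilde D$ for some $k < 1$; this follows from comparing the infinitesimal pseudonorms $\alpha_D(x;v)$ and $\alpha_{\tilde D}(x;v)$ for $x \in \tilde D$, using that the complex disk centered at $x$ in direction $v$ fits inside $D$ with a uniformly larger radius than it does inside $\tilde D$ (quantified by the $\epsilon/2$-gap). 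Chaining the three inequalities gives $d_D(f(x),f(y)) \leq k\, d_D(x,y)$ for all $x,y \in D$.

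Finally, starting from any $x_0 \in D$ I iterate $x_{n+1} := f(x_n)$. From $n = 1$ onwards the iterates lie in $f(D)$, which is bounded and keeps distance at least $\epsilon$ from $\partial D$, so on $\overline{f(D)} \subseteq D$ the metric $d_D$ is complete and dominates the norm. The geometric decay $d_D(x_n, x_{n+1}) \leq k^{n-1} d_D(x_1, x_2)$ forces $(x_n)$ to be $d_D$-Cauchy, hence norm-Cauchy, and the limit $x_\infty \in D$ is a fixed point by continuity of $f$; uniqueness is immediate from the strict contraction. The hard part will be the second step: upgrading the trivial pointwise inequality $d_D \leq d_{\tilde D}$ to a uniform factor $k < 1$ independent of $x$ and $v$ requires producing enough holomorphic functions on $\tilde D$ with controlled derivative, which in the Banach setting (where normal-families arguments are unavailable) is the essential technical content of the proof.
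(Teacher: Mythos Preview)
The paper does not actually prove this theorem: it states it as Theorem~\ref{Earle-Hamilton}, notes that ``strict holomorphic mappings are strict contractions in the so-called Carath\'eodory--Riffen--Finsler metric,'' and refers the reader to \cite{Earle} and \cite{Harris} for a proof. Your proposal is precisely a sketch of that standard argument, so you are in complete agreement with the approach the paper points to.

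A couple of minor remarks on your sketch itself. First, the statement in the paper allows $D$ to be unbounded (only $f$ is assumed bounded), in which case the CRF pseudometric $d_D$ can degenerate; your construction of the intermediate domain $\tilde D = f(D) + B_{\epsilon/2}(0)$, which \emph{is} bounded, already contains the fix---one simply runs the whole contraction argument on $\tilde D$ rather than on $D$, since $f$ maps $\tilde D$ strictly inside $\tilde D$ as well. Second, the ``hard part'' you flag---obtaining a uniform constant $k<1$ in $\alpha_D \leq k\,\alpha_{\tilde D}$---is in fact handled by an explicit dilation trick in the Earle--Hamilton/Harris proof: for $h\colon D\to\mathbb{D}$ one perturbs $f$ to $\tilde f(x)=f(x)+s\,\epsilon\, h(f(x))\,u$ (or uses an affine dilation about a point of $f(D)$), which still lands in $D$ and yields the extra factor directly at the infinitesimal level. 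So the difficulty is real but has a short, constructive resolution rather than requiring any compactness or normal-families argument.
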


\section{Matrix-valued spectra and Cauchy transforms}

The statement of the following lemma is well-known and quite simple. But since it turns out to be extremely helpful, it is convenient to recall it here.

\begin{lemma}\label{norm-inverse}
Let $(A,\|\cdot\|)$ be a complex Banach-algebra with unit $\1$. If $x\in A$ is invertible and $y\in A$ satisfies $\|x-y\| < \sigma\frac{1}{\|x^{-1}\|}$ for some $0<\sigma<1$, then $y$ is invertible as well and we have
$$\|y^{-1}\| \leq \frac{1}{1-\sigma} \|x^{-1}\|.$$ 
\end{lemma}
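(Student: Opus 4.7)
The plan is to reduce the invertibility of $y$ to the invertibility of an element of the form $\1 - z$ with $\|z\| < 1$, for which the Neumann series provides both existence and a norm bound on the inverse. Concretely, I would factor $y$ through $x$ by writing
$$y = x - (x-y) = x\bigl(\1 - x^{-1}(x-y)\bigr),$$
so that invertibility of $y$ reduces to invertibility of $\1 - x^{-1}(x-y)$.

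Next I would estimate $z := x^{-1}(x-y)$ using submultiplicativity of the norm in the Banach algebra: by hypothesis
$$\|z\| \leq \|x^{-1}\|\,\|x-y\| < \|x^{-1}\|\cdot \sigma\frac{1}{\|x^{-1}\|} = \sigma < 1.$$
Therefore the Neumann series $\sum_{k=0}^{\infty} z^k$ converges absolutely in $A$ (this uses completeness of $A$) and provides an inverse of $\1 - z$ with
$$\bigl\|(\1-z)^{-1}\bigr\| \leq \sum_{k=0}^\infty \|z\|^k \leq \sum_{k=0}^\infty \sigma^k = \frac{1}{1-\sigma}.$$

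Finally I would assemble the inverse of $y$: since $y = x(\1-z)$ with both factors invertible, $y^{-1} = (\1-z)^{-1} x^{-1}$, and by submultiplicativity
$$\|y^{-1}\| \leq \bigl\|(\1-z)^{-1}\bigr\|\,\|x^{-1}\| \leq \frac{1}{1-\sigma}\|x^{-1}\|,$$
which is exactly the claimed bound. There is no real obstacle here; the only thing to be careful about is the factorisation on the correct side (one could equally well write $y = (\1 - (x-y)x^{-1})x$ and obtain the same estimate by a symmetric argument), and to invoke completeness of $A$ to guarantee convergence of the Neumann series.
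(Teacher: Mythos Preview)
Your proof is correct and follows essentially the same approach as the paper: both factor $y = x(\1 - x^{-1}(x-y))$, invert via the Neumann series, and bound the norm termwise to obtain the estimate. The paper writes the inverse directly as $\sum_{n=0}^\infty (x^{-1}(x-y))^n x^{-1}$, which is just your $(\1-z)^{-1}x^{-1}$ expanded.
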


\begin{proof}
We can easily check that
$$\sum^\infty_{n=0} \big(x^{-1}(x-y)\big)^nx^{-1}$$
is absolutely convergent in $A$ and gives the inverse element of $y$. Moreover we get
$$\|y^{-1}\| \leq \sum^\infty_{n=0} \big(\|x^{-1}\|\|x-y\|\big)^n \|x^{-1}\| < \frac{1}{1-\sigma}\|x^{-1}\|,$$
which proves the stated estimate.
\end{proof}

Let $(\c,\tau)$ be a non-commutative $C^\ast$-probability space, i.e., $\c$ is a unital $C^*$-algebra and $\tau$ is a unital state (positive linear functional) on $\c$; we will always assume that $\tau$ is faithful. For fixed $m\in\N$ we define the operator-valued $C^\ast$-probability space $\A:=\M_m(\C)\otimes\c$ with conditional expectation
$$E:=\id_m\otimes\tau:\ \A\rightarrow\M_m(\C),\ b\otimes c \mapsto \tau(c)b,$$
where we denote by $\M_m(\C)$ the $C^\ast$-algebra of all $m\times m$ matrices over the complex numbers $\C$. 
Under the canonical identification of $\M_m(\C)\otimes\c$ with $\M_m(\c)$
(matrices with entries in $\c$), the expectation $E$ corresponds to applying the state $\tau$ entrywise in a matrix. We will also identify $b\in \M_m(\C)$ with $b\otimes 1\in \A$.

\begin{definition}
For $a\in\A=\M_m(\c)$ we define the \textbf{matrix-valued resolvent set}
$$\rho_m(a) := \{b\in \M_m(\C)\mid \text{$b- a$ is invertible in $\A$}\}$$
and the \textbf{matrix-valued spectrum}
$$\sigma_m(a) := \M_m(\C) \backslash \rho_m(a).$$
\end{definition}

Since the set $\GL(\A)$ of all invertible elements in $\A$ is an open subset of $\A$ (cf. Lemma \ref{norm-inverse}), the continuity of the mapping
$$f_a:\ \M_m(\C)\rightarrow\A,\ b\mapsto b - a$$
implies, that the matrix-valued resolvent set $\rho_m(a) = f^{-1}_a(\GL(\A))$ of an element $a\in\A$ is an open subset of $\M_m(\C)$. Hence, the matrix-valued spectrum $\sigma_m(a)$ is always closed.

Although the behavior of this matrix-valued generalizations of the classical resolvent set and spectrum seems to be quite similar to the classical case (which is of course included in our definition for $m=1$), the matrix valued spectrum is in general not bounded and hence not a compact subset of $\M_m(\C)$. For example, we have for all $\lambda\in\C$, that
$$\sigma_m(\lambda\1) = \{b\in\M_m(\C)\mid \lambda\in\sigma_{\M_m(\C)}(b)\},$$
i.e. $\sigma_m(\lambda\1)$ consists of all matrices $b\in\M_m(\C)$ for which $\lambda$ belongs to the spectrum $\sigma_{\M_m(\C)}(b)$. Particularly, $\sigma_m(\lambda\1)$ is unbounded for $m\geq 2$.

In the following, we denote by $\GL_m(\C) := \GL(\M_m(\C))$ the set of all invertible matrices in $\M_m(\C)$.

\begin{lemma}\label{spectra-inclusion}
Let $a\in\A$ be given. Then for all $b\in\GL_m(\C)$ the following inclusion holds:
$$\big\{\lambda b\mid \lambda\in\rho_\A(b^{-1}a)\big\} \subseteq \rho_m(a)$$
\end{lemma}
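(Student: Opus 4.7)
The plan is to reduce the invertibility of $\lambda b - a$ in $\A$ to the invertibility of $\lambda \1 - b^{-1}a$ in $\A$, using that $b$ (identified with $b\otimes \1$) is invertible in $\A$ whenever it is invertible in $\M_m(\C)$.

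First I would fix $b\in\GL_m(\C)$ and $\lambda\in\rho_\A(b^{-1}a)$; by definition of the scalar-valued resolvent set of the element $b^{-1}a\in\A$, this means that $\lambda\1 - b^{-1}a$ is invertible in $\A$. Next, I would observe the factorization
$$\lambda b - a = b\bigl(\lambda\1 - b^{-1}a\bigr),$$
which holds in $\A$ under the identification of $b\in\M_m(\C)$ with $b\otimes\1\in\A$ (and uses that $b$ commutes with scalars, so that left-multiplication by $b$ inside the bracket agrees with the outside factor).

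The key point is then that $b$, viewed as $b\otimes\1\in\A$, is invertible: its inverse is $b^{-1}\otimes\1$, where $b^{-1}$ exists in $\M_m(\C)$ by assumption. Hence $\lambda b - a$ is a product of two invertible elements of $\A$, so it is itself invertible in $\A$. By the definition of $\rho_m(a)$, this gives $\lambda b\in\rho_m(a)$, which is exactly the claimed inclusion.

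I do not expect any serious obstacle: the statement is essentially a bookkeeping observation about how the matrix-valued resolvent set sees dilates of an element by invertible matrices. The only thing one has to be slightly careful about is that the resolvent set $\rho_\A(b^{-1}a)$ in the statement refers to the ordinary scalar-valued resolvent set of $b^{-1}a$ as an element of the Banach algebra $\A$, while $\rho_m(a)$ is the matrix-valued resolvent set just introduced; the factorization above is precisely the bridge between the two notions.
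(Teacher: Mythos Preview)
Your proposal is correct and follows essentially the same approach as the paper: both fix $\lambda\in\rho_\A(b^{-1}a)$, use the factorization $\lambda b - a = b(\lambda\1 - b^{-1}a)$, and conclude invertibility of $\lambda b - a$ from that of the two factors. Your version spells out slightly more explicitly why $b\otimes\1$ is invertible in $\A$, but the argument is the same.
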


\begin{proof}
Let $\lambda\in\rho_\A(b^{-1}a)$ be given. By definition of the usual resolvent set this means that $\lambda\1 - b^{-1}a$ is invertible in $\A$. It follows, that
$$\lambda b- a = b \big(\lambda 1 - b^{-1}a\big)$$
is invertible as well, and we get, as desired, $\lambda b\in \rho_m(a)$.
\end{proof}

\begin{lemma}\label{resolvent}
For all $0\neq a\in\A$ we have
$$\Big\{b\in\GL_m(\C)\mid \|b^{-1}\| < \frac{1}{\|a\|}\Big\} \subseteq \rho_m(a)$$
and
$$\sigma_m(a) \cap \GL_m(\C) \subseteq \Big\{b\in\GL_m(\C)\mid \|b^{-1}\| \geq \frac{1}{\|a\|}\Big\}.$$
\end{lemma}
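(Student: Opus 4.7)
The plan is to prove only the first inclusion; the second is its logical contrapositive (restricted to $\GL_m(\C)$), so it follows immediately.

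For the first inclusion, I would fix $b \in \GL_m(\C)$ with $\|b^{-1}\| < \frac{1}{\|a\|}$ and factor $b - a = b(\1 - b^{-1}a)$. Since $b$ is invertible in $\M_m(\C)$, and hence in $\A$, invertibility of $b - a$ reduces to invertibility of $\1 - b^{-1}a$. Now $\|b^{-1}a\| \leq \|b^{-1}\| \cdot \|a\| < 1$, so Lemma \ref{norm-inverse} (applied with $x = \1$, $y = \1 - b^{-1}a$, $\sigma = \|b^{-1}a\|$) shows that $\1 - b^{-1}a$ is invertible in $\A$. Consequently $b - a \in \GL(\A)$, which is precisely the statement $b \in \rho_m(a)$.

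Alternatively, and perhaps more in the spirit of the preceding Lemma \ref{spectra-inclusion}: the estimate $\|b^{-1}a\| < 1$ forces the spectral radius of $b^{-1}a$ in $\A$ to be strictly less than $1$, so $1 \in \rho_\A(b^{-1}a)$; applying Lemma \ref{spectra-inclusion} then yields $b = 1\cdot b \in \rho_m(a)$.

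For the second inclusion, I would simply observe that a matrix $b \in \GL_m(\C)$ which lies in $\sigma_m(a)$ cannot satisfy $\|b^{-1}\| < \frac{1}{\|a\|}$, as that would contradict the first inclusion; hence such a $b$ must satisfy $\|b^{-1}\| \geq \frac{1}{\|a\|}$. No step here looks like a real obstacle — the only mild subtlety is to make sure the factorization $b - a = b(\1 - b^{-1}a)$ takes place in $\A$ after the identification $b \leftrightarrow b \otimes \1$, which is already fixed in the setup preceding the definition of $\rho_m(a)$.
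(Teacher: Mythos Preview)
Your proposal is correct and follows essentially the same line as the paper's own proof: both reduce the second inclusion to the first, and for the first both factor $b-a = b(\1 - b^{-1}a)$ and use $\|b^{-1}a\| \leq \|b^{-1}\|\,\|a\| < 1$ to conclude invertibility of $\1 - b^{-1}a$ (the paper does this directly via the Neumann-series criterion, you via Lemma~\ref{norm-inverse}, which amounts to the same thing). Your alternative route through Lemma~\ref{spectra-inclusion} is a harmless rephrasing of the same argument.
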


\begin{proof}
Obviously, the second inclusion is a direct consequence of the first. Hence, it suffices to show the first statement.

Let $b\in\GL_m(\C)$ with $\|b^{-1}\| < \frac{1}{\|a\|}$ be given. It follows, that $h := \1 - b^{-1}a$ is invertible, because
$$\|\1-h\| = \|b^{-1}a\| \leq \|b^{-1}\| \cdot\|a\| < 1.$$
Therefore, we can deduce, that also
\begin{equation}\label{norm-start}
b - a = b\big(1 - b^{-1}a\big)
\end{equation}
is invertible, i.e. $b\in\rho_m(a)$. This proves the assertion.
\end{proof}

The main reason to consider matrix-valued resolvent sets is, that they are the natural domains for matrix-valued Cauchy transforms, which we will define now.

\begin{definition}
For $a\in\A$ we call
$${G}_a:\ \rho_m(a) \rightarrow \M_m(\C),\ b\mapsto E\big[(b - a)^{-1}\big]$$
the \textbf{matrix-valued Cauchy transform} of $a$.
\end{definition}

Note that ${G}_a$ is a continuous function (and hence locally bounded) and induces for all $b_0\in \rho_m(a)$, $b\in\M_m(\C)$ and bounded linear functionals $\phi: \A\rightarrow\C$ a function 
$$\lambda \mapsto \phi\big({G}_a(b_0+\lambda b)\big),$$
which is holomorphic in a neighborhood of $\lambda=0$. Hence, ${G}_a$ is weakly holomorphic and therefore (as we have seen in the previous section) strongly holomorphic as well.

Because the structure of $\rho_m(a)$ and therefore the behavior of ${G}_a$ might in general be quite complicated, we restrict our attention to a suitable restriction of ${G}_a$. In this way, we will get some additional properties of ${G}_a$.

The first restriction enables us to control the norm of the matrix-valued Cauchy transform on a sufficiently nice subset of the matrix-valued resolvent set. 

\begin{lemma}\label{bounded}
Let $0\neq a\in\A$ be given. For $0<\theta<1$ the matrix valued Cauchy transform ${G}_a$ induces a mapping
$${G}_a:\ \Big\{b\in\GL_m(\C)\mid \|b^{-1}\| < \theta\cdot\frac{1}{\|a\|}\Big\} \rightarrow \Big\{b\in\M_m(\C)\mid \|b\| < \frac{\theta}{1-\theta}\cdot \frac{1}{\|a\|}\Big\}.$$
\end{lemma}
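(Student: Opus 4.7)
The plan is to run a straightforward Neumann-series estimate, combined with the fact that conditional expectations are contractive on $C^\ast$-algebras.

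First, I would note that the hypothesis $\|b^{-1}\| < \theta/\|a\|$ together with Lemma~\ref{resolvent} already ensures that $b\in\rho_m(a)$, so $G_a(b)$ is defined. The key observation is the factorization
$$b - a = b\bigl(\1 - b^{-1}a\bigr),$$
which was already used in the proof of Lemma~\ref{resolvent}. From the assumption I get $\|b^{-1}a\| \leq \|b^{-1}\|\cdot\|a\| < \theta < 1$, so $\1 - b^{-1}a$ is invertible and the usual geometric-series estimate (or a direct application of Lemma~\ref{norm-inverse} with $x=\1$, $y=\1-b^{-1}a$, $\sigma=\theta$) yields
$$\bigl\|(\1 - b^{-1}a)^{-1}\bigr\| \leq \frac{1}{1-\theta}.$$

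Next I would invert the factorization to write $(b-a)^{-1} = (\1 - b^{-1}a)^{-1} b^{-1}$ and combine the two bounds:
$$\bigl\|(b-a)^{-1}\bigr\| \leq \frac{1}{1-\theta}\,\|b^{-1}\| < \frac{1}{1-\theta}\cdot\frac{\theta}{\|a\|} = \frac{\theta}{(1-\theta)\,\|a\|}.$$
Finally, since $E = \id_m\otimes\tau$ is a conditional expectation onto $\M_m(\C)\subseteq \A$, it is completely contractive, and in particular $\|E[x]\| \leq \|x\|$ for every $x\in\A$. Applying this to $x = (b-a)^{-1}$ gives
$$\|G_a(b)\| = \bigl\|E[(b-a)^{-1}]\bigr\| \leq \bigl\|(b-a)^{-1}\bigr\| < \frac{\theta}{(1-\theta)\,\|a\|},$$
which is exactly the claimed range.

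There is no real obstacle here: the argument is purely the Neumann-series bound made quantitative with the parameter $\theta$, plus contractivity of $E$. The only thing worth double-checking is that one genuinely has $\|E[x]\|\leq \|x\|$ for matrix-valued conditional expectations of this form; this follows either from the general theory of conditional expectations in $C^\ast$-algebras, or more concretely here from the fact that $\tau$ is a state (hence $|\tau(c)| \leq \|c\|$) applied entrywise combined with the operator-norm estimate on $\M_m(\c)$.
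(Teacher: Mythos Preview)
Your argument is correct and is essentially the paper's own proof: both use the factorization $b-a=b(\1-b^{-1}a)$, the Neumann-series bound $\|(\1-b^{-1}a)^{-1}\|\leq 1/(1-\theta)$, and then $\|G_a(b)\|\leq\|(b-a)^{-1}\|$. The only cosmetic difference is that the paper writes out the geometric series explicitly while you package it via Lemma~\ref{norm-inverse}, and you make the contractivity of $E$ explicit where the paper leaves it implicit.
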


\begin{proof}
Lemma \ref{resolvent} (c) tells us, that the open set
$$U:=\Big\{b\in\GL_m(\C)\mid \|b^{-1}\| < \theta\cdot\frac{1}{\|a\|}\Big\}$$
is contained in $\rho_m(a)$, i.e. ${G}_a$ is well-defined on $U$. Moreover, we get from \eqref{norm-start}
$$(b-a)^{-1} = \big(\1 - b^{-1} a\big)^{-1}b^{-1} = \sum^\infty_{n=0} \big(b^{-1}a\big)^nb^{-1}$$
and hence
\begin{equation}\label{estimates-0}
\|{G}_a(b)\| \leq \|(b-a)^{-1}\| \leq \|b^{-1}\| \sum^\infty_{n=0} \big(\|b^{-1}\|\|a\|\big)^n < \frac{\theta}{1-\theta}\cdot \frac{1}{\|a\|}
\end{equation}
for all $b\in U$. This proves the claim.
\end{proof}

To ensure, that the range of ${G}_a$ is contained in $\GL_m(\C)$, we have to shrink the domain again. 

\begin{lemma}\label{invertible}
Let $0\neq a\in\A$ be given. For $0<\theta<1$ and $c>1$ we define
$$\Omega := \Big\{b\in\GL_m(\C)\mid \|b^{-1}\| < \theta\cdot\frac{1}{\|a\|},\ \|b\|\cdot\|b^{-1}\| < c\Big\}$$
and
$$\Omega' := \Big\{b\in\GL_m(\C)\mid \|b\| < \frac{\theta}{1-\theta}\cdot \frac{1}{\|a\|}\Big\}.$$
If the condition
$$\frac{\theta}{1-\theta} < \frac{\sigma}{c}$$
is satisfied for some $0<\sigma<1$, then the matrix-valued Cauchy transform ${G}_a$ induces a mapping ${G}_a: \Omega\rightarrow\Omega'$ and we have the estimates
\begin{equation}\label{estimates}
\|{G}_a(b)\| \leq \|(b - a)^{-1}\| < \frac{\theta}{1-\theta}\cdot \frac{1}{\|a\|} \qquad\text{for all $b\in\Omega$}
\end{equation}
and
\begin{equation}\label{estimates-invers}
\|{G}_a(b)^{-1}\| < \frac{1}{1-\sigma}\cdot \|b\| \qquad\text{for all $b\in\Omega$}.
\end{equation}
\end{lemma}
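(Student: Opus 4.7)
\medskip

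\noindent\textbf{Proof plan.} The bound \eqref{estimates} is nothing but Lemma \ref{bounded} applied on the smaller domain $\Omega \subseteq \{b\in\GL_m(\C)\mid \|b^{-1}\|<\theta/\|a\|\}$, so it requires no new work; in particular it already confirms the norm part of the membership $G_a(b)\in\Omega'$. The real content is to show that $G_a(b)$ is invertible in $\M_m(\C)$ for every $b\in\Omega$ and to establish \eqref{estimates-invers}. My plan is to compare $G_a(b)$ with the invertible element $b^{-1}\in\M_m(\C)$ and then invoke Lemma \ref{norm-inverse} with $x=b^{-1}$ and $y=G_a(b)$.

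To carry out the comparison, I would reuse the Neumann expansion already exploited in the proof of Lemma \ref{bounded}: since $\|b^{-1}a\|\leq\|b^{-1}\|\|a\|<\theta<1$, we have
$$(b-a)^{-1}=\sum_{n=0}^\infty (b^{-1}a)^n b^{-1}.$$
Applying $E=\id_m\otimes\tau$ termwise and using that $b^{-1}\in\M_m(\C)$ is fixed by $E$, the $n=0$ summand contributes exactly $b^{-1}$ and all remaining terms are norm-controlled by the corresponding geometric tail, giving
$$\|G_a(b)-b^{-1}\|\;\leq\;\|b^{-1}\|\sum_{n=1}^\infty\bigl(\|b^{-1}\|\|a\|\bigr)^n\;<\;\frac{\theta}{1-\theta}\,\|b^{-1}\|$$
for every $b\in\Omega$.

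Now the second defining inequality of $\Omega$, namely $\|b\|\cdot\|b^{-1}\|<c$, together with the hypothesis $\frac{\theta}{1-\theta}<\frac{\sigma}{c}$, allows me to rewrite the previous estimate as
$$\|b^{-1}-G_a(b)\|\;<\;\frac{\theta}{1-\theta}\cdot\frac{c}{\|b\|}\;<\;\frac{\sigma}{\|b\|}\;=\;\sigma\cdot\frac{1}{\|(b^{-1})^{-1}\|}.$$
This is precisely the smallness condition required in Lemma \ref{norm-inverse} with $x=b^{-1}$ and $y=G_a(b)$. That lemma therefore yields simultaneously the invertibility of $G_a(b)$ (hence $G_a(b)\in\GL_m(\C)\subseteq\Omega'$) and the quantitative bound $\|G_a(b)^{-1}\|<\frac{1}{1-\sigma}\|b\|$, which is exactly \eqref{estimates-invers}.

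No single step looks delicate; the main conceptual point is the elementary but crucial observation that, because $b^{-1}\in\M_m(\C)$ is $E$-invariant, the Neumann expansion of $(b-a)^{-1}$ singles out $b^{-1}$ as the leading term of $G_a(b)$. This lets the qualitative bound of Lemma \ref{bounded} be upgraded, via Lemma \ref{norm-inverse} and the geometric condition $\frac{\theta}{1-\theta}<\sigma/c$ linking $\theta$, $c$, and $\sigma$, to an invertibility statement with explicit control of $\|G_a(b)^{-1}\|$ in terms of $\|b\|$.
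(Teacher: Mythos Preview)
Your proposal is correct and follows essentially the same argument as the paper: both estimate $\|G_a(b)-b^{-1}\|$ via the Neumann tail, convert this into $\|G_a(b)-b^{-1}\|<\sigma/\|b\|$ using $\|b\|\|b^{-1}\|<c$ and the hypothesis $\frac{\theta}{1-\theta}<\sigma/c$, and then apply Lemma~\ref{norm-inverse} with $x=b^{-1}$ to obtain invertibility of $G_a(b)$ together with \eqref{estimates-invers}, while \eqref{estimates} is inherited from Lemma~\ref{bounded}.
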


\begin{proof}
For all $b\in\Omega$ we have
$$
{G}_a(b) - b^{-1} = E\big[(b-a)^{-1} - b^{-1}\big]= E\Big[\sum^\infty_{n=1} \big(b^{-1}a\big)^nb^{-1}\Big],
$$
which enables us to deduce
$$\|{G}_a(b) - b^{-1}\| \leq \|b^{-1}\| \sum^\infty_{n=1} \big(\|b^{-1}\|\|a\|\big)^n \leq \frac{\theta}{1-\theta}\cdot \|b^{-1}\| < \frac{\theta}{1-\theta}\cdot \frac{c}{\|b\|} < \sigma\cdot\frac{1}{\|b\|}.$$
Using Lemma \ref{norm-inverse}, this implies ${G}_a(b) \in \GL_m(\C)$ and \eqref{estimates-invers}. Since we already know from \eqref{estimates-0} in Lemma \ref{bounded}, that \eqref{estimates} holds, it follows ${G}_a(b)\in\Omega'$ and the proof is complete.
\end{proof}

\begin{remark}
Since domains of our holomorphic functions should be connected it is
necessary to note, that for $\kappa>0$ and $c>1$
$$\Omega = \big\{b\in\GL_m(\C)\mid \|b^{-1}\| < \kappa,\ 
\|b\|\cdot\|b^{-1}\| < c\big\}$$
and for $r>0$
$$\Omega' = \big\{b\in\GL_m(\C)\mid \|b\| < r\big\}$$
are pathwise connected subsets of $\M_m(\C)$. Indeed, if $b_1,b_2 \in \GL_m(\C)$ are given, we consider their polar decomposition $b_1=U_1P_1$ and $b_2=U_2P_2$ with unitary matrices $U_1,U_2\in\GL_m(\C)$ and positive-definite Hermitian matrices $P_1,P_2\in\GL_m(\C)$ and define (using functional calculus for normal elements in the $C^\ast$-algebra $\M_m(\C)$)
$$\gamma:\ [0,1] \rightarrow \GL_m(\C),\ t\mapsto U_1^{1-t} P_1^{1-t} U_2^t P_2^t.$$
Then $\gamma$ fulfills $\gamma(0)=b_1$ and $\gamma(1)=b_2$, and $\gamma([0,1])$ is contained in $\Omega$ and $\Omega'$ if $b_1,b_2$ are elements of $\Omega$ and $\Omega'$, respectively.
\end{remark}

Since the matrix-valued Cauchy transform is a solution of a special equation (cf. \cite{Speicher2}), we will be interested in the following situation: 

\begin{corollary}\label{unique}
Let $\eta: \GL_m(\C)\rightarrow\M_m(\C)$ be a holomorphic function satisfying
$$\|\eta(w)\| \leq M \|w\| \qquad\text{for all $w\in\GL_m(\C)$}$$
for some $M>0$. Moreover, we assume that
$$b{G}_a(b) = \1 + \eta({G}_a(b)){G}_a(b) \qquad\text{for all $b\in\Omega$}$$
holds. Let $0<\theta,\sigma<1$ and $c>1$ be given with
$$\frac{\theta}{1-\theta} < \sigma\min\Big\{\frac{1}{c},\ \frac{\|a\|^2}{M}\Big\}$$
and let $\Omega$ and $\Omega'$ be as in Lemma \ref{invertible}.

Then, for fixed $b\in\Omega$, the equation
\begin{equation}\label{F-equation}
bw = \1 + \eta(w)w,\qquad w\in\Omega'
\end{equation}
has a unique solution, which is given by $w={G}_a(b)$.
\end{corollary}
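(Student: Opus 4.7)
The plan is to recast \eqref{F-equation} as a fixed-point problem and apply the Earle-Hamilton theorem (Theorem \ref{Earle-Hamilton}). Rewriting $bw=\1+\eta(w)w$ as $(b-\eta(w))w=\1$, any $w\in\Omega'$ solves \eqref{F-equation} if and only if $b-\eta(w)$ is invertible and $w$ is a fixed point of
$$F:\ \Omega'\to\M_m(\C),\qquad F(w):=(b-\eta(w))^{-1}.$$
By Lemma \ref{invertible} we have $G_a(b)\in\Omega'$, and the hypothesis $bG_a(b)=\1+\eta(G_a(b))G_a(b)$ says exactly that $G_a(b)$ is a fixed point of $F$. So it will suffice to show that $F$ is a bounded holomorphic self-map of $\Omega'$ whose image lies strictly inside $\Omega'$, because Earle-Hamilton then gives uniqueness of the fixed point.

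Set $r:=\frac{\theta}{(1-\theta)\|a\|}$, the radius of $\Omega'$. For $w\in\Omega'$, combining $\|\eta(w)\|\leq M\|w\|<Mr$ with $\|b^{-1}\|<\theta/\|a\|$ gives
$$\|b^{-1}\eta(w)\|<\frac{M\theta^2}{(1-\theta)\|a\|^2}<\sigma\theta<1,$$
where the middle inequality uses the hypothesis $\frac{\theta}{1-\theta}<\sigma\frac{\|a\|^2}{M}$. By Lemma \ref{norm-inverse} this makes $\1-b^{-1}\eta(w)$ invertible, hence also $b-\eta(w)=b(\1-b^{-1}\eta(w))\in\GL_m(\C)$; the Neumann series then yields the uniform bound
$$\|F(w)\|\leq\frac{\|b^{-1}\|}{1-\|b^{-1}\eta(w)\|}<\frac{\theta/\|a\|}{1-\sigma\theta}=:\rho,$$
and $\rho<r$ because $\sigma<1$. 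Holomorphy of $F$ is inherited from $\eta$ and from the holomorphy of matrix inversion on $\GL_m(\C)$; so $F:\Omega'\to\Omega'$ is bounded and holomorphic.

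It remains to check strict interior containment. Besides the uniform norm bound $\|F(w)\|\leq\rho<r$, notice that $\|F(w)^{-1}\|=\|b-\eta(w)\|\leq\|b\|+Mr$ is also uniformly bounded on $\Omega'$. Setting $\epsilon:=\min\bigl(r-\rho,\,(\|b\|+Mr)^{-1}\bigr)>0$, another application of Lemma \ref{norm-inverse} guarantees that every $z$ within distance $\epsilon$ of $F(w)$ is invertible, while the triangle inequality gives $\|z\|<r$; hence $B_\epsilon(F(w))\subseteq\Omega'$ uniformly in $w$. Since $\Omega'$ is a domain by the remark preceding the corollary, Theorem \ref{Earle-Hamilton} produces a unique fixed point of $F$ in $\Omega'$, which must be $G_a(b)$.

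The main obstacle is purely bookkeeping: the hypothesis bundles two competing constraints, $\frac{\theta}{1-\theta}<\sigma/c$ (needed via Lemma \ref{invertible} to place $G_a(b)$ inside $\Omega'$) and $\frac{\theta}{1-\theta}<\sigma\|a\|^2/M$ (needed to dominate the $\eta$-perturbation), and keeping both of them active at the right moment is what upgrades the self-mapping property of $F$ to strict interior containment.
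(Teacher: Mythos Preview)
Your proof is correct and follows essentially the same route as the paper's: recast \eqref{F-equation} as the fixed-point equation $F(w)=(b-\eta(w))^{-1}=w$, use the bound $\|b^{-1}\eta(w)\|<\sigma\theta$ to show $F$ is a bounded holomorphic self-map of $\Omega'$ with image in a strictly smaller ball, verify strict interior containment via a uniform bound on $\|F(w)^{-1}\|$, and conclude by Earle--Hamilton. The only cosmetic difference is in the explicit choice of the $\epsilon$ witnessing strict containment (the paper inserts a factor $\tfrac12$ and rewrites $Mr$ as $\sigma\|a\|$ using the hypothesis), but the logic is identical.
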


\begin{proof}
Let $b\in\Omega$ be given. For all $w\in\Omega'$ we get
$$\|\eta(w)\| \leq M \|w\| \leq \frac{\theta}{1-\theta}\cdot\frac{M}{\|a\|}$$
and therefore
$$\|b^{-1}\eta(w)\| \leq \|b^{-1}\| \|\eta(w)\| \leq \frac{\theta}{1-\theta}\cdot\frac{M}{\|a\|^2}\cdot\theta < \theta\sigma < 1.$$
This means, that $\1 - b^{-1}\eta(w)$ and hence $b-\eta(w)$ is invertible with
\begin{align*}
\|(b-\eta(w))^{-1}\| &\leq \|b^{-1}\|\|(1-b^{-1}\eta(w))^{-1}\|\\ &\leq \|b^{-1}\| \sum^\infty_{n=0} \|b^{-1} \eta(w)\|^n\\ &< \frac{\theta}{1-\theta\sigma}\cdot\frac{1}{\|a\|},
\end{align*}
and shows, that we have a well-defined and holomorphic mapping
$$\F:\ \Omega'\rightarrow\M_m(\C),\ w\mapsto (b-\eta(w))^{-1}$$
with
$$\|\F(w)\| = \|(b-\eta(w))^{-1}\| < \frac{\theta}{1-\theta\sigma}\cdot\frac{1}{\|a\|} < \frac{\theta}{1-\theta}\cdot\frac{1}{\|a\|}$$
and therefore $\F(w)\in\Omega'$.
 
Now, we want to show that $\F(\Omega')$ lies strictly inside $\Omega'$. We put
$$\epsilon := \min\Big\{\frac12\cdot\frac{1}{\|b\|+\sigma\|a\|},\ \Big(1-\frac{1-\theta}{1-\theta\sigma}\Big)\cdot\frac{\theta}{1-\theta}\cdot\frac{1}{\|a\|}\Big\} > 0$$
and consider $w\in\Omega'$ and $u\in\M_m(\C)$ with $\|u-\F(w)\| < \epsilon$. At first, we get
$$\|b-\eta(w)\| \leq \|b\| + \|\eta(w)\| \leq \|b\| + \frac{M}{\|a\|}\cdot \frac{\theta}{1-\theta} \leq \|b\| + \sigma\|a\|$$
and thus
$$\|u-(b-\eta(w))^{-1}\| = \|u-\F(w)\| < \epsilon \leq \frac12\cdot\frac{1}{\|b\| + \sigma\|a\|} \leq \frac12\cdot\frac{1}{\|b-\eta(w)\|},$$
which shows $u\in\GL_m(\C)$, and secondly
\begin{eqnarray*}
\|u\| &=& \|u-(b-\eta(w))^{-1}\| + \|\F(w)\|\\
      &<& \epsilon + \frac{1-\theta}{1-\theta\sigma}\cdot\frac{\theta}{1-\theta}\cdot\frac{1}{\|a\|}\\
      &<& \frac{\theta}{1-\theta}\cdot\frac{1}{\|a\|}
\end{eqnarray*}
which shows $u\in\Omega'$. 

Let now $w\in\Omega'$ be a solution of \eqref{F-equation}. This implies that
$$w^{-1}\F(w) = w^{-1}(b-\eta(w))^{-1} = \big(bw-\eta(w)w\big)^{-1} = \1,$$
and hence $\F(w)=w$. Since $\F: \Omega'\rightarrow\Omega'$ is holomorphic on the domain $\Omega'$ and $\F(\Omega')$ lies strictly inside $\Omega'$, it follows by the Theorem of Earle-Hamilton, Theorem \ref{Earle-Hamilton}, that $\F$ has exactly one fixed point. Because ${G}_a(b)$ (which is an element of $\Omega'$ by Lemma \ref{invertible}) solves \eqref{F-equation} by assumption and hence is already a fixed point of $\F$, it follows $w={G}_a(b)$ and we are done.
\end{proof}

\begin{remark}
Let $(\A',E')$ be an arbitrary operator-valued $C^\ast$-probability space with conditional expectation $E': \A'\rightarrow\M_m(\C)$. This provides us with a unital (and continuous) $\ast$-embedding $\iota: \M_m(\C) \rightarrow\A'$. In this section, we only considered the special embedding
$$\iota:\ \M_m(\C)\rightarrow\A,\ b\mapsto b\otimes\1,$$
which is given by the special structure $\A = \M_m(\C) \otimes \c$. But we can define matrix-valued resolvent sets, spectra and Cauchy transforms also in this more general framework. To be more precise, we put for all $a\in\A'$
$$\rho_m(a) := \{b\in \M_m(\C)\mid \text{$\iota(b) - a$ is invertible in $\A'$}\}$$
and $\sigma_m(a) := \M_m(\C) \backslash \rho_m(a)$ and
$${G}_a:\ \rho_m(a) \rightarrow \M_m(\C),\ b\mapsto E'\big[(\iota(b) - a)^{-1}\big].$$
We note, that all the results of this section stay valid in this general situation. 
\end{remark}

\section{Multivariate free central limit theorem}

\subsection{Setting and first observations}

Let $(X_i)_{i\in\N}$ be a sequence in the operator-valued probability space $(\A,E)$ with $\A=\M_m(\c)=\M_m(\C)\otimes\c$ and $E=\id\otimes\tau$, as defined in the previous section. We assume:
\begin{itemize}
 \item All $X_i$'s have the same $\ast$-distribution with respect to $E$ and their first moments vanish, i.e. $E[X_i]=0$.
 \item The $X_i$ are $*$-free with amalgamation over $\M_m(\C)$ (which means that the $\ast$-algebras $\X_i$, generated by $\M_m(\C)$ and $X_i$, are free with respect to $E$).
 \item We have $\displaystyle{\sup_{i\in\N} \|X_i\| < \infty}$.
\end{itemize}
If we define the linear (and hence holomorphic) mapping
$$\eta:\ \M_m(\C)\rightarrow\M_m(\C),\ b\mapsto E[X_ibX_i],$$
we easily get from the continuity of $E$, that
$$\|\eta(b)\| \leq \Big(\sup_{i\in\N} \|X_i\|\Big)^2 \|b\| \qquad\text{for all $b\in\M_m(\C)$}$$
holds. Hence we can find $M>0$ such that $\|\eta(b)\| < M\|b\|$ holds for all $b\in\M_m(\C)$. Moreover, we have for all $k\in\N$ and all $b_1,\dots,b_k\in\M_m(\C)$
$$\sup_{i\in\N} \|E[X_ib_1X_i\dots b_kX_i]\| \leq  \Big(\sup_{i\in\N} \|X_i\|\Big)^{k+1} \|b_1\|\cdots\|b_k\|.$$
Since $(X_i)_{i\in\N}$ is a sequence of centered free non-commutative random variables, Theorem 8.4 in \cite{Voiculescu} tells us that the sequence $(S_n)_{n\in\N}$ defined by
$$S_n := \frac{1}{\sqrt{n}} \sum^n_{i=1} X_i,\qquad n\in\N$$
converges to an operator-valued semicircular element $s$. Moreover, we know from Theorem 4.2.4 in \cite{Speicher2} that the operator-valued Cauchy transform ${G}_s$ satisfies
$$b{G}_s(b) = \1 + \eta({G}_s(b)){G}_s(b)\qquad\text{for all $b\in U_r$},$$
where we put $U_r := \{b\in\GL_m(\C)\mid \|b^{-1}\| < r\} \subseteq \rho_m(s)$ for all suitably small $r>0$.

By Proposition 7.1 in \cite{Junge}, the boundedness of the sequence $(X_i)_{i\in\N}$ guarantees boundedness of $(S_n)_{n\in\N}$ as well. In order to get estimates for the difference between the Cauchy transforms ${G}_s$ and ${G}_{S_n}$ we will also need the fact, that $(S_n)_{n\in\N}$ is bounded away from $0$. The precise statement is part of the following lemma, which also includes a similar statement for
$$S_n^{[i]} := S_n - \frac{1}{\sqrt{n}} X_i 
=\frac{1}{\sqrt{n}} \sum^n_{j=1\atop j\not=i} X_j
\qquad\text{for all $n\in\N$ and $1\leq i \leq n$}.$$

\begin{lemma}\label{lower-bound}
In the situation described above, we have for all $n\in\N$ and all $1\leq i \leq n$
$$\|S_n\| \geq \|\alpha\|^{\frac{1}{2}} \qquad\text{and}\qquad \|S_n^{[i]}\| \geq \sqrt{1-\frac{1}{n}} \|\alpha\|^{\frac{1}{2}},$$
where $\alpha := E[X_i^\ast X_i]\in\M_m(\C)$.
\end{lemma}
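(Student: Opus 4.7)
The plan is to exploit the freeness with amalgamation together with the trace-like property that the conditional expectation $E$ is contractive. The key observation is that $\|S_n\|^2 = \|S_n^\ast S_n\| \geq \|E[S_n^\ast S_n]\|$ since $E$, as a conditional expectation onto $\M_m(\C)$, has norm one. Hence it suffices to show $E[S_n^\ast S_n] = \alpha$ (and analogously for $S_n^{[i]}$).

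First I would expand
$$E[S_n^\ast S_n] = \frac{1}{n}\sum_{i,j=1}^n E[X_i^\ast X_j].$$
For the diagonal terms $i=j$, identical distribution gives $E[X_i^\ast X_i] = \alpha$. For the off-diagonal terms $i\neq j$, I would argue that freeness over $\M_m(\C)$ of the algebras $\X_i$ and $\X_j$, together with $\M_m(\C)$-bimodularity of $E$, implies $E[X_i^\ast X_j] = E[X_i^\ast]\,E[X_j]$: writing $X_i^\ast = (X_i^\ast - E[X_i^\ast]) + E[X_i^\ast]$ and similarly for $X_j$, the only term in the expansion of $E[X_i^\ast X_j]$ that survives after applying the defining vanishing property of freeness with amalgamation to centered alternating products is $E[X_i^\ast]\,E[X_j]$. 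Since $E[X_i] = 0$ implies also $E[X_i^\ast] = 0$ (taking adjoints), all off-diagonal terms vanish. This yields
$$E[S_n^\ast S_n] = \frac{1}{n}\cdot n \cdot \alpha = \alpha,$$
from which $\|S_n\|^2 \geq \|E[S_n^\ast S_n]\| = \|\alpha\|$ follows.

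For the second inequality, the same calculation applied to $S_n^{[i]} = \frac{1}{\sqrt{n}} \sum_{j\neq i} X_j$ gives
$$E\bigl[(S_n^{[i]})^\ast S_n^{[i]}\bigr] = \frac{1}{n}\sum_{j\neq i} \alpha = \frac{n-1}{n}\,\alpha = \Bigl(1 - \frac{1}{n}\Bigr)\alpha,$$
so that $\|S_n^{[i]}\|^2 \geq (1 - 1/n)\|\alpha\|$, which upon taking square roots yields the stated bound. There is no substantial obstacle here; the only point requiring care is the correct application of the freeness identity for computing the mixed second moments, which hinges on the vanishing of the first moments $E[X_i] = 0$ and on the bimodularity of $E$ over $\M_m(\C)$.
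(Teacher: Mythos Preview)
Your proof is correct and follows essentially the same approach as the paper: both use the contractivity of $E$ to get $\|S_n\|^2 \geq \|E[S_n^\ast S_n]\|$, then expand the double sum and kill the off-diagonal terms via $\ast$-freeness and the centering $E[X_i]=0$. The only cosmetic difference is that for $S_n^{[i]}$ the paper writes $E[(S_n^{[i]})^\ast S_n^{[i]}] = E[S_n^\ast S_n] - \frac{1}{n}E[X_i^\ast X_i]$ rather than summing over $j\neq i$ directly, but the content is identical.
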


\begin{proof}
By the $*$-freeness of $X_1,X_2,\dots$, we have 
$$E[X_i^*X_j]=E[X_i^*]\cdot
E[X_j]=0,\qquad\text{for $i\not=j$}$$ 
and thus
$$\|S_n\|^2 = \|S_n^\ast S_n\| \geq \|E[S_n^\ast S_n]\| = \frac{1}{n} \bigg\|\sum^n_{i,j=1} E[X_i^\ast X_j] \bigg\| = \|\alpha\|.$$         
Similarly
\begin{align*}
\|S_n^{[i]}\|^2 &= \|(S_n^{[i]})^\ast S_n^{[i]}\|\\
               & \geq \|E[(S_n^{[i]})^\ast S_n^{[i]}]\|\\
                &= \bigg\|E[S_n^\ast S_n] - \frac{1}{n} E[X_i^\ast X_i]\bigg\|\\
                &= \frac{n-1}{n} \|\alpha\|,
\end{align*}
which proves the statement.
\end{proof}

We define for $n\in\N$
$$R_n:\ \rho_m(S_n) \rightarrow \A,\ b\mapsto \big(b - S_n\big)^{-1}$$
and for $n\in\N$ and $1\leq i \leq n$
$$R_n^{[i]}:\ \rho_m(S_n^{[i]}) \rightarrow \A,\ b\mapsto   \big(b - S_n^{[i]}\big)^{-1}.$$

\begin{lemma}
For all $n\in\N$ and $1\leq i \leq n$ we have
\begin{equation}\label{resolvent1}
R_n(b) = R_n^{[i]}(b) + \frac{1}{\sqrt{n}} R_n^{[i]}(b)X_iR_n^{[i]}(b) + \frac{1}{n} R_n(b) X_i R_n^{[i]}(b) X_i R_n^{[i]}(b)
\end{equation}
and
\begin{equation}\label{resolvent2}
R_n(b) = R_n^{[i]}(b) + \frac{1}{\sqrt{n}} R_n^{[i]}(b) X_i R_n(b)
\end{equation}
for all $b\in\rho_m(S_n) \cap \rho_m(S_n^{[i]})$.
\end{lemma}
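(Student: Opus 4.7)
The plan is to reduce everything to the standard first-order resolvent identity. For any two invertible elements $A,B$ in a Banach algebra we have the two equivalent forms
$$B^{-1} = A^{-1} + A^{-1}(A-B)B^{-1} = A^{-1} + B^{-1}(A-B)A^{-1},$$
as one checks immediately by multiplying out. I would apply this with $A := b - S_n^{[i]}$ and $B := b - S_n$, both of which are invertible for $b \in \rho_m(S_n)\cap\rho_m(S_n^{[i]})$, and with $A-B = S_n - S_n^{[i]} = \frac{1}{\sqrt{n}} X_i$.

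For equation \eqref{resolvent2}, the first form of the identity gives
$$R_n(b) = R_n^{[i]}(b) + \frac{1}{\sqrt{n}}\, R_n^{[i]}(b)\, X_i\, R_n(b)$$
directly, which is exactly what is claimed.

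For equation \eqref{resolvent1}, I would iterate once using the second form of the basic identity. Writing
$$R_n(b) = R_n^{[i]}(b) + \frac{1}{\sqrt{n}}\, R_n(b)\, X_i\, R_n^{[i]}(b)$$
and then substituting the first-order expansion $R_n(b) = R_n^{[i]}(b) + \frac{1}{\sqrt{n}} R_n(b) X_i R_n^{[i]}(b)$ for the occurrence of $R_n(b)$ on the right-hand side yields
$$R_n(b) = R_n^{[i]}(b) + \frac{1}{\sqrt{n}}\, R_n^{[i]}(b)\, X_i\, R_n^{[i]}(b) + \frac{1}{n}\, R_n(b)\, X_i\, R_n^{[i]}(b)\, X_i\, R_n^{[i]}(b),$$
which is exactly \eqref{resolvent1}.

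There is no real obstacle here; the only thing to be slightly careful about is using the correct one of the two forms of the basic resolvent identity in each case, so that the orderings of $R_n(b)$ and $R_n^{[i]}(b)$ in the final formulas come out as stated (note that the algebra $\A$ is non-commutative, so the ordering genuinely matters). The verification of the basic identity itself is a one-line computation: multiply $A^{-1} + B^{-1}(A-B)A^{-1}$ on the left by $B$ (respectively the other form on the right by $A$) and observe that the result is $\1$.
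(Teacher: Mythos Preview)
Your proof is correct and is essentially the same argument as the paper's: both derive \eqref{resolvent2} from a single application of the resolvent identity and \eqref{resolvent1} by applying it twice. The only cosmetic difference is packaging---the paper starts from the sandwich $(b-S_n)R_n(b)(b-S_n^{[i]}) = b-S_n^{[i]}$, expands the right-hand side using $b-S_n^{[i]} = (b-S_n) + \tfrac{1}{\sqrt{n}}X_i$ (with an intermediate insertion of $(b-S_n^{[i]})R_n^{[i]}(b)=\1$), and then multiplies by $R_n(b)$ on the left and $R_n^{[i]}(b)$ on the right; unwinding this is exactly your iterated resolvent identity.
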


\begin{proof}
We have
\begin{align*}
\big(b-S_n\big) R_n(b) \big(b-S_n^{[i]}\big)
&=\ b-S_n^{[i]}\\
 & =\ \big(b-S_n\big) + \frac{1}{\sqrt{n}} \big(b-S^{[i]}_n\big) R_n^{[i]}(b) X_i\\
&  =\ \big(b-S_n\big) + \frac{1}{\sqrt{n}} \big(b-S_n\big) R_n^{[i]}(b) X_i + \frac{1}{n} X_i R_n^{[i]}(b) X_i,
\end{align*}
which leads, by multiplication with $R_n(b)=(b-S_n)^{-1}$ from the left and with $R_n^{[i]}(b)= (b-S_n^{[i]})^{-1}$ from the right, to equation \eqref{resolvent1}.

Moreover, we have
$$\big(b - S^{[i]}_n\big) R_n(b) \big(b - S_n\big)
= b - S^{[i]}_n = \big(b - S_n\big) + \frac{1}{\sqrt{n}} X_i,$$
which leads, by multiplication with $R_n(b)=(b-S_n)^{-1}$ from the right and with $R_n^{[i]}(b)= (b-S_n^{[i]})^{-1}$ from the left, to equation \eqref{resolvent2}.
\end{proof}

Obviously, we have
$$G_n := {G}_{S_n} = E\circ R_n \qquad\text{and}\qquad G_n^{[i]} := {G}_{S_n^{[i]}} = E\circ R_n^{[i]}.$$

\subsection{Proof of the main theorem}

During this subsection, let $0<\theta,\sigma<1$ and $c>1$ be given, such that
\begin{equation}\label{constants1}
\frac{\theta}{1-\theta} < \sigma \min\Big\{\frac{1}{c},\ \frac{\|\alpha\|}{M}\Big\}
\end{equation}
holds. For all $n\in\N$ we define
$$\kappa_n := \theta \min\Big\{\frac{1}{\|s\|}, \frac{1}{\|S_n\|}, \frac{1}{\|S_n^{[1]}\|},\dots, \frac{1}{\|S_n^{[n]}\|}\Big\}$$
and
$$\Omega_n := \big\{b\in\GL_m(\C)\mid \|b^{-1 }\| < \kappa_n,\ \|b\|\cdot\|b^{-1}\| < c\big\}.$$
Lemma \ref{resolvent} shows, that $\Omega_n$ is a subset of $\rho_m(S_n)$.

\begin{theorem}\label{Theta}
For all $2\leq n\in\N$ the function $G_n$ satisfies the following equation
$$\Lambda_n(b) G_n(b) = \1 + \eta(G_n(b))G_n(b),\qquad b\in \Omega_n,$$
where
$$\Lambda_n:\ \Omega_n\rightarrow\M_m(\C),\ b\mapsto b-\Theta_n(b) G_n(b)^{-1},$$
with a holomorphic function
$$\Theta_n:\ \Omega_n\rightarrow\M_m(\C)$$
satisfying
$$\sup_{b\in\Omega_n} \|\Theta_n(b)\| \leq \frac{C}{\sqrt{n}}$$
with a constant $C>0$, independent of $n$.
\end{theorem}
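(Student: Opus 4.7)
The plan is to derive a fixed-point equation for $G_n$ by expanding the resolvent $R_n(b)$ in terms of the ``leave-one-out'' resolvents $R_n^{[i]}(b)$, using operator-valued freeness at each step, and bundling all finite-$n$ remainders into a single small term $\Theta_n$. Concretely, first I would apply $E$ to the trivial identity $b R_n(b) = \1 + S_n R_n(b)$ to obtain
$$b G_n(b) = \1 + \frac{1}{\sqrt n} \sum_{i=1}^n E[X_i R_n(b)],$$
and then substitute \eqref{resolvent1} for each $R_n(b)$ on the right. The goal is to rewrite the whole right-hand side in the form $\1 + \eta(G_n(b)) G_n(b) + \Theta_n(b)$; defining $\Lambda_n(b) := b - \Theta_n(b) G_n(b)^{-1}$ is then legitimate, since Lemma \ref{invertible} (applied with $a = S_n$ and the assumption \eqref{constants1}) ensures that $G_n(b) \in \GL_m(\C)$ on $\Omega_n$.

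For the leading term, $*$-freeness of $X_i$ with amalgamation over $\M_m(\C)$ from the algebra generated by $\{X_j : j \neq i\}$, together with $E[X_i] = 0$, gives the standard operator-valued computations
$$E[X_i R_n^{[i]}(b)] = 0 \qquad\text{and}\qquad E[X_i R_n^{[i]}(b) X_i R_n^{[i]}(b)] = \eta\bigl(G_n^{[i]}(b)\bigr) G_n^{[i]}(b).$$
Substituting \eqref{resolvent1} and averaging therefore turns the sum into
$$\frac{1}{n} \sum_{i=1}^n \eta\bigl(G_n^{[i]}(b)\bigr) G_n^{[i]}(b) + \frac{1}{n^{3/2}} \sum_{i=1}^n E\bigl[X_i R_n(b) X_i R_n^{[i]}(b) X_i R_n^{[i]}(b)\bigr].$$
To reach the claimed form I would then swap each $G_n^{[i]}(b)$ for $G_n(b)$; by \eqref{resolvent2}, the difference $G_n(b) - G_n^{[i]}(b) = \tfrac{1}{\sqrt n} E[R_n^{[i]}(b) X_i R_n(b)]$ is itself $O(1/\sqrt n)$, so the swap introduces an error of the same order, which I would absorb into $\Theta_n$.

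The remaining work is a quantitative bound. The conditions defining $\Omega_n$ are designed precisely so that Lemma \ref{invertible} applies simultaneously to each of $S_n, S_n^{[1]}, \dots, S_n^{[n]}$, giving uniform estimates of the form $\|R_n(b)\|, \|R_n^{[i]}(b)\| \leq \tfrac{\theta}{1-\theta}\cdot\tfrac{1}{\|S_n\|}$ and analogously, while the lower bounds $\|S_n\| \geq \|\alpha\|^{1/2}$ and $\|S_n^{[i]}\| \geq \sqrt{1-1/n}\, \|\alpha\|^{1/2}$ from Lemma \ref{lower-bound} keep those estimates uniform in $n$. Combined with $L := \sup_i \|X_i\| < \infty$ and $\|\eta(w)\| \leq M\|w\|$, the first error contribution from the $G_n^{[i]} \to G_n$ swap is bounded by a constant times $1/\sqrt n$, and the triple-product remainder, being a sum of $n$ terms whose norms are each controlled by $L^3$ times cubes of the resolvent bounds, divided by $n^{3/2}$, is also $O(1/\sqrt n)$. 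Holomorphy of $\Theta_n$ on $\Omega_n$ is inherited from the holomorphy of $b\mapsto R_n(b), R_n^{[i]}(b), G_n(b), G_n^{[i]}(b)$, of the linear map $\eta$, and of matrix inversion on $\GL_m(\C)$.

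The main obstacle I expect is not any individual computation but the bookkeeping of three different resolvents on a single common domain: this is exactly why $\Omega_n$ is built with the combined threshold $\kappa_n = \theta\min\{1/\|s\|, 1/\|S_n\|, 1/\|S_n^{[1]}\|, \dots, 1/\|S_n^{[n]}\|\}$, and why the uniform-in-$n$ lower bounds from Lemma \ref{lower-bound} are indispensable in order for the constant $C$ to be chosen independently of $n$.
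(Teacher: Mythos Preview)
Your proposal is correct and follows essentially the same route as the paper's proof: the identity $bG_n(b)=\1+E[S_nR_n(b)]$, the substitution of \eqref{resolvent1} together with the freeness identities $E[X_iR_n^{[i]}(b)]=0$ and $E[X_iR_n^{[i]}(b)X_iR_n^{[i]}(b)]=\eta(G_n^{[i]}(b))G_n^{[i]}(b)$, the replacement of $G_n^{[i]}$ by $G_n$ via \eqref{resolvent2}, and the uniform resolvent bounds from Lemma~\ref{invertible} combined with Lemma~\ref{lower-bound} are exactly the ingredients the paper uses, and your organization of the error terms into $\Theta_n$ matches the paper's $\Theta_n=\tfrac{1}{n}\sum_i r_{n,3}^{[i]}$. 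The only cosmetic point is that the resolvent bound for $R_n^{[i]}(b)$ should read $\tfrac{\theta}{1-\theta}\cdot\tfrac{1}{\|S_n^{[i]}\|}$ rather than $\tfrac{1}{\|S_n\|}$, but your ``and analogously'' already signals this and the subsequent use of Lemma~\ref{lower-bound} handles it.
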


\begin{proof}
(i) Let $n\in\N$ and $b\in\rho_m(S_n)$ be given. Then we have
$$S_nR_n(b) = bR_n(b) - (b - S_n)R_n(b) = bR_n(b) - \1$$
and hence
$$E[S_n R_n(b)] = E\big[bR_n(b) - \1\big] = b G_n(b) - \1.$$
(ii) Let $n\in\N$ be given. For all
$$b\in\rho_{m,n} := \rho_m(S_n) \cap \bigcap_{i=1}^n \rho_m(S_n^{[i]})$$
we deduce from the formula in \eqref{resolvent1}, that
\begin{align*}
&E[S_nR_n(b)] 
= \frac{1}{\sqrt{n}} \sum^n_{i=1} E[X_i R_n(b)]\\
&= \frac{1}{\sqrt{n}} \sum^n_{i=1} \bigg(E\big[X_iR_n^{[i]}(b)\big] + \frac{1}{\sqrt{n}} E\big[X_i R_n^{[i]}(b) X_i R_n^{[i]}(b)\big]\\
&  \qquad\qquad\qquad\qquad + \frac{1}{n} E\big[X_i R_n(b) X_i R_n^{[i]}(b) X_i R_n^{[i]}(b)\big]\bigg)\\
&= \frac{1}{n} \sum^n_{i=1} \bigg(E\big[X_i R_n^{[i]}(b) X_i R_n^{[i]}(b)\big] + \frac{1}{\sqrt{n}} E\big[X_i R_n(b) X_i R_n^{[i]}(b) X_i R_n^{[i]}(b)\big]\bigg)\\
&= \frac{1}{n} \sum^n_{i=1} \bigg(E\big[X_i G_n^{[i]}(b) X_i\big] G_n^{[i]}(b) + \frac{1}{\sqrt{n}} E\big[X_i R_n(b) X_i R_n^{[i]}(b) X_i R_n^{[i]}(b)\big]\bigg)\\
&= \frac{1}{n} \sum^n_{i=1} \Big(\eta(G_n^{[i]}(b)) G_n^{[i]}(b) + r_{n,1}^{[i]}(b)\Big),
\end{align*}
where
$$r_{n,1}^{[i]}:\ \rho_m(S_n) \cap \rho_m(S_n^{[i]}) \rightarrow \M_m(\C),\ b\mapsto \frac{1}{\sqrt{n}} E\big[X_i R_n(b) X_i R_n^{[i]}(b) X_i R_n^{[i]}(b)\big].$$
There we used the fact, that, since the $(X_j)_{j\in\N}$ are free with respect to $E$, also $X_i$ is free from $R_n^{[i]}$, and thus we have
$$E\big[X_iR_n^{[i]}(b)\big] = E[X_i] E\big[R_n^{[i]}(b)\big] = 0$$
and
$$E\big[X_i R_n^{[i]}(b) X_i R_n^{[i]}(b)\big] = E\big[X_i E\big[R_n^{[i]}(b)\big] X_i\big] E \big[R_n^{[i]}(b)\big].$$
(iii) Taking \eqref{resolvent2} into account, we get for all $n\in\N$ and $1\leq i \leq n$
$$G_n(b) = E\big[R_n(b)\big] = E\big[R_n^{[i]}(b)\big] + \frac{1}{\sqrt{n}} E\big[R_n^{[i]}(b) X_i R_n(b)\big] = G_n^{[i]}(b) - r_{n,2}^{[i]}(b)$$
and therefore
$$G_n^{[i]}(b) = G_n(b) + r_{n,2}^{[i]}(b)$$
for all $b\in\rho_m(S_n) \cap \rho_m(S_n^{[i]})$, where we put
$$r_{n,2}^{[i]}:\ \rho_m(S_n) \cap \rho_m(S_n^{[i]}) \rightarrow \M_m(\C),\ b\mapsto -\frac{1}{\sqrt{n}} E\big[R_n^{[i]}(b) X_i R_n(b)\big].$$
(iv) The formula in (iii) enables us to replace $G_n^{[i]}$ in (ii) by $G_n$. Indeed, we get
\begin{eqnarray*}
E[S_nR_n(b)] &=& \frac{1}{n} \sum^n_{i=1} \Big(\eta(G_n^{[i]}(b)) G_n^{[i]}(b) + r_{n,1}^{[i]}(b)\Big)\\
             &=& \frac{1}{n} \sum^n_{i=1} \Big(\eta\big(G_n(b) + r_{n,2}^{[i]}(b)\big) \big(G_n(b) + r_{n,2}^{[i]}(b)\big) + r_{n,1}^{[i]}(b)\Big)\\
             &=& \eta(G_n(b))G_n(b) + \frac{1}{n} \sum^n_{i=1} r_{n,3}^{[i]}(b)
\end{eqnarray*}
for all $b\in\rho_{m,n}$, where the function
$$r_{n,3}^{[i]}:\ \rho_m(S_n) \cap \rho_m(S_n^{[i]}) \rightarrow \M_m(\C)$$ 
is defined by
$$r_{n,3}^{[i]}(b) := \eta(G_n(b)) r_{n,2}^{[i]}(b) + \eta(r_{n,2}^{[i]}(b)) G_n(b) + \eta(r_{n,2}^{[i]}(b)) r_{n,2}^{[i]}(b) + r_{n,1}^{[i]}(b).$$
(v) Combining the results from (i) and (iv), it follows
$$b G_n(b) - 1 = E[S_nR_n(b)] = \eta(G_n(b))G_n(b) + \Theta_n(b),$$
where we define
$$\Theta_n:\ \rho_{m,n} \rightarrow\M_m(\C),\ b\mapsto \frac{1}{n} \sum^n_{i=1} r_{n,3}^{[i]}(b).$$
Due to \eqref{constants1}, Lemma \ref{resolvent} and Lemma \ref{invertible} show that $\Omega_n \subseteq \rho_{m,n}$ and $G_n(b)\in\GL_m(\C)$ for $b\in \Omega_n$. This gives
$$\big(b-\Theta_n(b) G_n(b)^{-1}\big) G_n(b) = \1 + \eta(G_n(b))G_n(b)$$
and hence, as desired, for all $b\in\Omega_n$
$$\Lambda_n(b) G_n(b) = \1 + \eta(G_n(b))G_n(b).$$
(v) The definition of $\Omega_n$ gives, by Lemma \ref{estimates} and by Lemma \ref{lower-bound}, the following estimates
$$\|G_n(b)\| \leq \|R_n(b)\| \leq \frac{\theta}{1-\theta}\cdot\frac{1}{\|S_n\|} \leq \frac{\theta}{1-\theta}\cdot\frac{1}{\|\alpha\|^{\frac{1}{2}}},\qquad b\in \Omega_n$$
and
$$\|G_n^{[i]}(b)\| \leq \|R_n^{[i]}(b)\| \leq \frac{\theta}{1-\theta}\cdot\frac{1}{\|S^{[i]}_n\|} \leq \frac{\theta}{1-\theta}\cdot\frac{1}{\sqrt{1-\frac{1}{n}}\|\alpha\|^{\frac{1}{2}}},\qquad b\in \Omega_n.$$
Therefore, we have for all $b\in\Omega_n$ by (ii)
$$\|r_{n,1}^{[i]}(b)\| \leq \frac{1}{\sqrt{n}} \|X_i\|^3 \|R_n(b)\| \|R_n^{[i]}(b)\|^2 \leq \frac{1}{\sqrt{n}} \frac{n}{n-1} \Big(\frac{\theta}{1-\theta}\frac{1}{\|\alpha\|^{\frac{1}{2}}}\Big)^3 \|X_i\|^3$$
and by (iii)
$$\|r_{n,2}^{[i]}(b)\| \leq \frac{1}{\sqrt{n}} \|X_i\| \|R_n(b)\| \|R_n^{[i]}(b)\| \leq \frac{1}{\sqrt{n-1}} \Big(\frac{\theta}{1-\theta}\frac{1}{\|\alpha\|^{\frac{1}{2}}}\Big)^2 \|X_i\| $$
and finally by (iv)
\begin{eqnarray*}
\|r_{n,3}^{[i]}(b)\| &\leq& 2M \|G_n(b)\| \|r_{n,2}^{[i]}(b)\| + M\|r_{n,2}^{[i]}(b)\|^2 + \|r_{n,1}^{[i]}(b)\|\\
                     &\leq& \frac{1}{\sqrt{n-1}} \Big(\frac{\theta}{1-\theta}\frac{1}{\|\alpha\|^{\frac{1}{2}}}\Big)^3 \|X_i\| \cdot\\
                     & & \bigg(2M+ \frac{1}{\sqrt{n-1}}M\Big(\frac{\theta}{1-\theta}\frac{1}{\|\alpha\|^{\frac{1}{2}}}\Big) \|X_i\| + \sqrt{\frac{n}{n-1}} \|X_i\|^2\bigg)\\
                     &\leq& \frac{C}{\sqrt{n}}
\end{eqnarray*}
for all $b\in\Omega_n$, where $C>0$ is a constant, which is independent of $n$. Hence, it follows from (v) that
$$\sup_{b\in\Omega_n} \|\Theta_n(b)\| \leq \frac{C}{\sqrt{n}}.$$
\end{proof}

The definition of $\Omega_n$ ensures, that
$$G := {G}_s:\ \rho_m(s) \rightarrow \M_m(\C)$$
satisfies
$$b G(b) = \1 + \eta(G(b)) G(b) \qquad\text{for all $b\in\Omega$},$$
where
$$\Omega := \Big\{b\in\GL_m(\C)\mid \|b^{-1}\| < \theta\cdot \frac{1}{\|s\|},\ \|b\|\cdot\|b^{-1}\| < c\Big\} \supseteq \Omega_n.$$

We choose
\begin{equation}\label{constants2}
0<\gamma< \frac{c-1}{c+1} \qquad\text{and}\qquad 0<\theta^\ast<(1-\gamma)\theta
\end{equation}
(note, that $0<\gamma<1$) and we put $c^\ast := c-(1+c)\gamma$, which fulfills clearly $1<c^\ast<c$. Since we have $\theta^\ast < \theta$ and $c^\ast < c$, we see
$$\frac{\theta^\ast}{1-\theta^\ast}c^\ast < \frac{\theta}{1-\theta}c < \sigma$$
and hence
\begin{equation}\label{constants3}
\frac{\theta^\ast}{1-\theta^\ast} < \frac{\sigma}{c^\ast}.
\end{equation}
Finally, we define
$$\kappa_n^\ast := \theta^\ast \min\Big\{\frac{1}{\|s\|}, \frac{1}{\|S_n\|}, \frac{1}{\|S_n^{[1]}\|},\dots, \frac{1}{\|S_n^{[n]}\|}\Big\}$$
and
$$\Omega_n^\ast := \Big\{b\in\GL_m(\C)\mid \|b^{-1}\| < \kappa^\ast_n,\ \|b\|\cdot\|b^{-1}\| < c^\ast\Big\} \subseteq \Omega_n.$$

\begin{corollary}
There exists $N\in\N$ such that
$$\Lambda_n(\Omega_n^\ast) \subseteq \Omega_n \qquad\text{for all $n\geq N$}.$$
\end{corollary}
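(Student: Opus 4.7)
My plan is to show that for $b \in \Omega_n^\ast$ the perturbation $\Lambda_n(b) - b = -\Theta_n(b)G_n(b)^{-1}$ is of size $O(1/\sqrt{n})$ relative to $\|b^{-1}\|^{-1}$, and then use Lemma \ref{norm-inverse} to transfer the defining inequalities of $\Omega_n^\ast$ (parameters $\theta^\ast,c^\ast$) to the slightly weaker defining inequalities of $\Omega_n$ (parameters $\theta,c$), with the slack absorbed by the small perturbation. The inequalities in \eqref{constants2} are tailored precisely to make this work.

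Concretely, I would first note that $\Omega_n^\ast\subseteq\Omega_n$, so by Lemma \ref{invertible} applied under \eqref{constants1} the estimate \eqref{estimates-invers} yields $\|G_n(b)^{-1}\|\leq\|b\|/(1-\sigma)$ for $b\in\Omega_n^\ast$. Combined with Theorem \ref{Theta}, this gives
$$\|\Lambda_n(b)-b\|\ =\ \|\Theta_n(b)G_n(b)^{-1}\|\ \leq\ \frac{C\,\|b\|}{(1-\sigma)\sqrt{n}}\ <\ \frac{C c^\ast}{(1-\sigma)\sqrt{n}}\cdot\frac{1}{\|b^{-1}\|},$$
where in the last step I used $\|b\|\|b^{-1}\|<c^\ast$. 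Choosing $N_1$ so that $\frac{Cc^\ast}{(1-\sigma)\sqrt{N_1}}<\gamma$, Lemma \ref{norm-inverse} (applied with $x=b$, $y=\Lambda_n(b)$, and $\gamma$ playing the role of $\sigma$) then shows that $\Lambda_n(b)\in\GL_m(\C)$ and
$$\|\Lambda_n(b)^{-1}\|\ \leq\ \frac{\|b^{-1}\|}{1-\gamma}\ <\ \frac{\kappa_n^\ast}{1-\gamma}\ =\ \frac{\theta^\ast}{1-\gamma}\min\Big\{\tfrac{1}{\|s\|},\tfrac{1}{\|S_n\|},\tfrac{1}{\|S_n^{[1]}\|},\dots,\tfrac{1}{\|S_n^{[n]}\|}\Big\}.$$
The condition $\theta^\ast<(1-\gamma)\theta$ from \eqref{constants2} is exactly what makes $\theta^\ast/(1-\gamma)<\theta$, so the right-hand side is bounded by $\kappa_n$, verifying the first defining inequality of $\Omega_n$.

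For the product condition, I would bound $\|\Lambda_n(b)\|\leq\|b\|(1+\tfrac{C}{(1-\sigma)\sqrt{n}})$ directly from the triangle inequality and combine with the previous estimate to get
$$\|\Lambda_n(b)\|\cdot\|\Lambda_n(b)^{-1}\|\ <\ \frac{c^\ast}{1-\gamma}\Bigl(1+\frac{C}{(1-\sigma)\sqrt{n}}\Bigr).$$
A short algebraic check using $c^\ast=c-(1+c)\gamma$ shows $\frac{c^\ast}{1-\gamma}=\frac{c-(1+c)\gamma}{1-\gamma}<c$, because this is equivalent to $-(1+c)\gamma<-c\gamma$, i.e.\ to $\gamma>0$. (The stronger constraint $\gamma<\frac{c-1}{c+1}$ from \eqref{constants2} was used earlier only to guarantee $c^\ast>1$ so that $\Omega_n^\ast$ is a genuine domain.) Hence by choosing $N_2$ large enough the factor $(1+C/((1-\sigma)\sqrt{n}))$ is close enough to $1$ that the product stays below $c$, verifying the second defining inequality of $\Omega_n$.

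Setting $N:=\max(N_1,N_2)$ then yields $\Lambda_n(\Omega_n^\ast)\subseteq\Omega_n$ for $n\geq N$. The main (and only) non-routine point is recognizing that the auxiliary parameters $\gamma,\theta^\ast,c^\ast$ in \eqref{constants2} are engineered precisely so that both strict inequalities $\theta^\ast/(1-\gamma)<\theta$ and $c^\ast/(1-\gamma)<c$ hold; the $O(1/\sqrt{n})$ corrections coming from Theorem \ref{Theta} are then harmlessly absorbed into these strict gaps for large $n$.
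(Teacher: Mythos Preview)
Your proof is correct and follows essentially the same route as the paper: bound $\|\Lambda_n(b)-b\|$ via $\|\Theta_n(b)\|\cdot\|G_n(b)^{-1}\|$ using Theorem \ref{Theta} and \eqref{estimates-invers}, then apply Lemma \ref{norm-inverse} to push the $\Omega_n^\ast$-inequalities up to the $\Omega_n$-inequalities using the slack built into \eqref{constants2}. The only cosmetic difference is that the paper fixes a single threshold $N$ (via $\|\Theta_n(b)\|\leq\frac{\gamma}{c^\ast}(1-\sigma)$) so that $\|\Lambda_n(b)-b\|<\gamma\|b\|/c^\ast$ serves for both conditions at once and yields the tight identity $\frac{c^\ast+\gamma}{1-\gamma}=c$, whereas you keep the $n$-dependence and take $N=\max(N_1,N_2)$, relying on the strict gap $\frac{c^\ast}{1-\gamma}<c$ to absorb the residual factor.
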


\begin{proof}
Since we have by Theorem \ref{Theta}
$$\sup_{b\in\Omega_n} \|\Theta_n(b)\| \leq \frac{C}{\sqrt{n}}$$
for all $2\leq n\in\N$, we can choose an $N\in\N$ such that
$$\sup_{b\in\Omega_n} \|\Theta_n(b)\| \leq \frac{\gamma}{c^\ast}(1-\sigma)$$
holds for all $n\geq N$. Now, we get for all $b\in \Omega_n^\ast$:
\begin{itemize}
 \item[(i)] $\Lambda_n(b)$ is invertible: Since \eqref{estimates-invers} gives $$\|G_n(b)^{-1}\| \leq \frac{1}{1-\sigma} \|b\| \qquad\text{for all $b\in\Omega_n$},$$ we immediately get $$\|\Lambda_n(b)-b\| \leq \|\Theta_n(b)\|\|G_n(b)^{-1}\| < \gamma \frac{\|b\|}{c^\ast} < \gamma \frac{1}{\|b^{-1}\|} < \frac{1}{\|b^{-1}\|}$$
 \item[(ii)] We have $\|\Lambda_n(b)^{-1}\| < \kappa_n$: Using Lemma \ref{norm-inverse}, we get from (i) that $$\|\Lambda_n(b)^{-1}\| \leq \frac{1}{1-\gamma}\|b^{-1}\| < \frac{\kappa^\ast_n}{1-\gamma} < \kappa_n.$$
 \item[(iii)] We have $\|\Lambda_n(b)\|\|\Lambda_n(b)^{-1}\| < c$: Using $$\|\Lambda_n(b)-b\| < \gamma \frac{\|b\|}{c^\ast}$$ from (i) and $$\|\Lambda_n(b)^{-1}\| < \frac{1}{1-\gamma}\|b^{-1}\|$$ from (ii), we get \begin{eqnarray*}\|\Lambda_n(b)\|\|\Lambda_n(b)^{-1}\| &\leq& \big(\|b\| + \|\Lambda_n(b)-b\|\big) \|\Lambda_n(b)^{-1}\|\\ &<& \Big(1+\frac{\gamma}{c^\ast}\Big)\frac{1}{1-\gamma}\cdot\|b\|\|b^{-1}\|\\ &<& \frac{c^\ast+\gamma}{1-\gamma} < c.\end{eqnarray*}
\end{itemize}
Finally, this shows $\Lambda_n(b) \in \Omega_n$.
\end{proof}

\begin{corollary}
For all $n\geq N$ we have
$$G_n(b) = G(\Lambda_n(b)) \qquad\text{for all $b\in \Omega_n^\ast$}.$$
\end{corollary}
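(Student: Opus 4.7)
The plan is to exhibit $G_n(b)$ and $G(\Lambda_n(b))$ as two solutions of the same matrix-valued fixed-point equation and then invoke the uniqueness part of Corollary \ref{unique} applied to $a=s$ at the point $\Lambda_n(b)$.

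Fix $n\geq N$ and $b\in\Omega_n^\ast$. By the previous corollary $\Lambda_n(b)\in\Omega_n$, and $\kappa_n\leq\theta/\|s\|$ by the definition of $\kappa_n$, so $\Omega_n\subseteq\Omega$ and hence $\Lambda_n(b)\in\Omega$. The equation recalled for $G=G_s$ on $\Omega$ gives
$$\Lambda_n(b)G(\Lambda_n(b)) = \1 + \eta\big(G(\Lambda_n(b))\big) G(\Lambda_n(b)),$$
while Theorem \ref{Theta} gives
$$\Lambda_n(b)G_n(b) = \1 + \eta(G_n(b))G_n(b).$$
So both $G_n(b)$ and $G(\Lambda_n(b))$ solve $\Lambda_n(b)w = \1 + \eta(w)w$, and Lemma \ref{invertible} applied to $a=s$ places $G(\Lambda_n(b))$ inside $\Omega':=\{w\in\GL_m(\C)\mid \|w\|<\tfrac{\theta}{1-\theta}\cdot\tfrac{1}{\|s\|}\}$.

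To invoke Corollary \ref{unique} with $a=s$, the hypothesis $\frac{\theta}{1-\theta}<\sigma\min\{1/c,\|s\|^2/M\}$ must be checked. The first half is part of \eqref{constants1}. For the second half, $\ast$-freeness and vanishing first moments give $E[S_n^\ast S_n]=\alpha$ for every $n$, hence $E[s^\ast s]=\alpha$ in the limit, so $\|\alpha\|=\|E[s^\ast s]\|\leq\|s^\ast s\|=\|s\|^2$ and therefore $\frac{\theta}{1-\theta}<\sigma\|\alpha\|/M\leq\sigma\|s\|^2/M$. It then remains to verify $G_n(b)\in\Omega'$. Because $b\in\Omega_n^\ast$ we have in particular both $\|b^{-1}\|<\theta^\ast/\|s\|$ and $\|b^{-1}\|\|S_n\|<\theta^\ast$, so a Neumann-series expansion of $(b-S_n)^{-1}=(\1-b^{-1}S_n)^{-1}b^{-1}$ yields
$$\|G_n(b)\| \leq \|(b-S_n)^{-1}\| \leq \frac{\|b^{-1}\|}{1-\|b^{-1}\|\|S_n\|} < \frac{\theta^\ast}{(1-\theta^\ast)\|s\|}.$$
Since $\theta^\ast<(1-\gamma)\theta<\theta$ and $x\mapsto x/(1-x)$ is strictly increasing on $[0,1)$, this is strictly less than $\tfrac{\theta}{(1-\theta)\|s\|}$; invertibility of $G_n(b)$ follows from Lemma \ref{invertible} applied to $a=S_n$ with parameters $(\theta^\ast,\sigma,c^\ast)$, whose only prerequisite $\frac{\theta^\ast}{1-\theta^\ast}<\sigma/c^\ast$ is exactly \eqref{constants3}. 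Hence $G_n(b)\in\Omega'$, and Corollary \ref{unique} forces $G_n(b)=G(\Lambda_n(b))$.

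The key subtlety, and in my view the main obstacle, is the reconciliation between the natural norm estimate produced by the Cauchy-transform machinery (which controls $G_n(b)$ in terms of $\|S_n\|^{-1}$) and the set $\Omega'$ on which uniqueness is available (which is formulated in terms of $\|s\|^{-1}$). The strict gap $\theta^\ast<\theta$ baked into the definition of $\Omega_n^\ast$, together with the use of $\|b^{-1}\|<\theta^\ast/\|s\|$ rather than merely $\|b^{-1}\|<\theta^\ast/\|S_n\|$, is precisely what allows the Neumann-series estimate to land strictly inside $\Omega'$ regardless of how $\|S_n\|$ compares with $\|s\|$.
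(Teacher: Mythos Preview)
Your proof is correct and follows the same overall strategy as the paper: exhibit $G_n(b)$ and $G(\Lambda_n(b))$ as solutions of the same equation $\Lambda_n(b)\,w=\1+\eta(w)w$ and invoke the uniqueness provided by Corollary~\ref{unique}. The only difference lies in the choice of uniqueness domain. The paper works on the smaller $n$-dependent set $\Omega_n':=\{w\in\GL_m(\C)\mid\|w\|<\kappa_n/(1-\theta)\}$, into which both solutions fall directly from the Neumann-series bound (using $\kappa_n\le\theta/\|s\|$ for $G(\Lambda_n(b))$ and $\kappa_n\le\theta/\|S_n\|$ for $G_n(b)$), and then checks the hypothesis of Corollary~\ref{unique} via the inequality $\|\alpha\|\le\|S_n\|^2$ from Lemma~\ref{lower-bound}. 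You instead apply Corollary~\ref{unique} with $a=s$ and the fixed domain $\Omega'=\{w\in\GL_m(\C)\mid\|w\|<\tfrac{\theta}{1-\theta}\tfrac{1}{\|s\|}\}$; this obliges you to supply the additional estimate $\|\alpha\|\le\|s\|^2$ (your limit argument is fine) and to exploit the strict gap $\theta^\ast<\theta$ so that the Neumann-series bound for $G_n(b)$ lands strictly inside $\Omega'$. Your route is a bit longer but entirely valid, and your closing paragraph correctly isolates the real point: reconciling a bound naturally expressed via $\|S_n\|$ with a domain defined via $\|s\|$.
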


\begin{proof}
For all $n\in\N$ we define
$$\Omega_n' := \Big\{b\in\GL_m(\C)\mid \|b\| < \frac{\kappa_n}{1-\theta}\Big\}.$$
Let $n\geq N$ and $b\in\Omega_n^\ast$ be given. We know, that
$$\Lambda_n(b) G(\Lambda_n(b)) = \1 + \eta(G(\Lambda_n(b))) G(\Lambda_n(b))$$
holds, i.e. $w=G(\Lambda_n(b)) \in \Omega_n'$ is a solution of the equation
$$\Lambda_n(b) w = \1 + \eta(w) w,\qquad w\in\Omega_n'.$$
Combining \eqref{constants1} with Lemma \ref{lower-bound}, we get
$$\frac{\theta}{1-\theta} < \sigma\min\Big\{\frac{1}{c},\ \frac{\|\alpha\|}{M}\Big\} \leq \sigma\min\Big\{\frac{1}{c},\ \frac{\|S_n\|^2}{M};\ n\in\N\Big\}.$$
Hence, the equation above has, by Theorem \ref{unique}, the unique solution $w=G_n(b) \in\Omega_n'$. This implies, as desired, $G_n(b) = G(\Lambda_n(b))$.
\end{proof}

\begin{corollary}
For all $n\geq N$ we have
$$\|G(b) - G_n(b)\| \leq C' \frac{1}{\sqrt{n}}\|b\| \qquad\text{for all $b\in \Omega_n^\ast$},$$
where $C'>0$ is a constant independent of $n$.
\end{corollary}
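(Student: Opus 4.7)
The plan is to combine the two preceding corollaries, which identify $G_n(b)$ with $G(\Lambda_n(b))$ on $\Omega_n^\ast$ and guarantee $\Lambda_n(\Omega_n^\ast)\subseteq\Omega_n$, with a resolvent identity and the norm bounds already established in this section.

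First, for $b\in\Omega_n^\ast$ and $n\geq N$, I would use the identification $G_n(b)=G(\Lambda_n(b))$ to rewrite
$$G(b)-G_n(b) = G(b)-G(\Lambda_n(b)).$$
Since $\Lambda_n(b)-b = -\Theta_n(b)G_n(b)^{-1}$ is an element of $\M_m(\C)$, and since both $b$ and $\Lambda_n(b)$ lie in $\Omega\subseteq\rho_m(s)$ (using $\Omega_n^\ast\subseteq\Omega_n\subseteq\Omega$ together with the preceding corollary for $\Lambda_n(b)$), the standard resolvent identity
$$(b-s)^{-1}-(\Lambda_n(b)-s)^{-1} = (b-s)^{-1}\bigl(\Lambda_n(b)-b\bigr)(\Lambda_n(b)-s)^{-1}$$
holds in $\A$, and applying $E$ yields
$$G(b)-G(\Lambda_n(b)) = E\Bigl[(b-s)^{-1}\bigl(\Lambda_n(b)-b\bigr)(\Lambda_n(b)-s)^{-1}\Bigr].$$

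Next, I would bound the three factors individually. Lemma \ref{invertible} applied to $a=s$ gives
$$\|(b-s)^{-1}\|,\ \|(\Lambda_n(b)-s)^{-1}\| \ \leq\ \frac{\theta}{(1-\theta)\|s\|} \qquad\text{for all $b\in\Omega$.}$$
For the middle factor, Theorem \ref{Theta} supplies $\|\Theta_n(b)\|\leq C/\sqrt{n}$ on $\Omega_n$, while Lemma \ref{invertible} applied to $a=S_n$ gives $\|G_n(b)^{-1}\|\leq \|b\|/(1-\sigma)$ on $\Omega_n$, so that
$$\|\Lambda_n(b)-b\| = \|\Theta_n(b)G_n(b)^{-1}\| \ \leq\ \frac{C\,\|b\|}{(1-\sigma)\sqrt{n}}.$$

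Multiplying the three estimates and using that the conditional expectation $E$ is contractive then gives
$$\|G(b)-G_n(b)\| \ \leq\ C'\,\frac{1}{\sqrt n}\,\|b\|$$
with
$$C' \ :=\ \Bigl(\frac{\theta}{(1-\theta)\|s\|}\Bigr)^{\!2}\cdot\frac{C}{1-\sigma},$$
which depends only on the parameters $\theta,\sigma,c,\|s\|$ and the constant from Theorem \ref{Theta}, hence is independent of $n$. There is no real obstacle here: all the analytic work was done in Theorem \ref{Theta} and in establishing the inclusion $\Lambda_n(\Omega_n^\ast)\subseteq\Omega_n$; what remains is just a clean resolvent identity followed by three elementary norm bounds.
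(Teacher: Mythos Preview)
Your proof is correct and follows the paper's argument essentially verbatim: the same identification $G_n(b)=G(\Lambda_n(b))$, the same resolvent identity, the same three norm bounds from \eqref{estimates}, \eqref{estimates-invers}, and Theorem~\ref{Theta}, and the same explicit constant $C'=\frac{C}{1-\sigma}\bigl(\frac{\theta}{(1-\theta)\|s\|}\bigr)^2$.
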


\begin{proof}
For all $b\in \Omega_n^\ast \subseteq \Omega_n \subseteq \Omega$ we have
\begin{eqnarray*}
G(b) - G_n(b) &=& G(b) - G(\Lambda_n(b))\\
              &=& E\big[(b - s)^{-1} - (\Lambda_n(b) - s)^{-1}\big]\\
              &=& E\big[(b - s)^{-1} (\Lambda_n(b) - b) (\Lambda_n(b)- s)^{-1}\big]
\end{eqnarray*}
and therefore by \eqref{estimates-invers}, which gives
$$\|G_n(b)^{-1}\| \leq \frac{1}{1-\sigma} \|b\| \qquad\text{for all $b\in\Omega_n^\ast$},$$
and (since $\Lambda_n(b)\in\Omega_n\subseteq \Omega$) by \eqref{estimates}
\begin{eqnarray*}
\|G(b) - G_n(b)\| &\leq& \|(b-s)^{-1}\|\cdot \|\Lambda_n(b)-b\|\cdot \|(\Lambda_n(b)-s)^{-1}\|\\
                  &\leq& \Big(\frac{\theta}{1-\theta} \cdot \frac{1}{\|s\|}\Big)^2 \cdot\|\Theta_n(b)\|\cdot\|G_n(b)^{-1}\|\\
                  &\leq& C' \frac{1}{\sqrt{n}} \|b\|,
\end{eqnarray*}
where
$$C' := \frac{C}{1-\sigma} \Big(\frac{\theta}{1-\theta} \cdot \frac{1}{\|s\|}\Big)^2 > 0.$$
This proves the corollary.
\end{proof}

We recall, that the sequence $(X_i)_{i\in\N}$ is bounded, which implies boundedness of the sequence $(S_n)_ {n\in\N}$ as well. This has the important consequence, that
$$\kappa_n^\ast  = \theta^\ast \min\Big\{\frac{1}{\|s\|}, \frac{1}{\|S_n\|}, \frac{1}{\|S_n^{[1]}\|},\dots, \frac{1}{\|S_n^{[n]}\|}\Big\} \geq \kappa^\ast$$
for some $\kappa^\ast > 0$. If we define
$$\Omega^\ast := \Big\{b\in\GL_m(\C)\mid \|b^{-1}\| < \kappa^\ast,\ \|b\|\cdot\|b^{-1}\| < c^\ast\Big\},$$
we easily see $\Omega^\ast \subseteq \Omega_n^\ast$ for all $n\in\N$. Hence, by renaming $\Omega^*$ to $\Omega$ etc., we have shown our main Theorem \ref{Berry-Esseen}.\\

We conclude this section with the following remark about the geometric structure of subsets of $\M_m(\C)$ like $\Omega$.

\begin{lemma}\label{annulus}
For $\kappa>0$ and $c>1$ we consider
$$\Omega := \Big\{b\in\GL_m(\C)\mid \|b^{-1}\| < \kappa,\ \|b\|\cdot\|b^{-1}\| < c\Big\}.$$
For $\lambda,\mu\in\C\backslash\{0\}$ we define
$$\Lambda(\lambda,\mu) := \begin{pmatrix} \lambda & 0      & \hdots & 0\\
                                          0       & \mu    & \hdots & 0\\
                                          \vdots  & \vdots & \ddots & \vdots\\
                                          0       & 0      & \hdots & \mu
                                          \end{pmatrix} \in\GL_m(\C).$$
If $\frac{1}{\kappa}<|\mu|$ holds, we have $\Lambda(\lambda,\mu) \in \Omega$ for all
\begin{equation}\label{annulus-condition}
\max\Big\{\frac{1}{\kappa}, \frac{|\mu|}{c}\Big\} < |\lambda| < c|\mu|.
\end{equation}
Particularly, we have for all $|\lambda|>\frac{1}{\kappa}$, that $\lambda\1 \in\Omega$.
\end{lemma}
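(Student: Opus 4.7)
The plan is a direct computation relying on the fact that $\Lambda(\lambda,\mu)$ is diagonal, hence its operator norm on $\M_m(\C)$ (as a $C^*$-algebra acting on $\C^m$) is completely transparent.

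First I would note that for a diagonal matrix $\Lambda(\lambda,\mu) = \operatorname{diag}(\lambda,\mu,\dots,\mu)$ one has
$$\|\Lambda(\lambda,\mu)\| = \max\{|\lambda|,|\mu|\} \qquad \text{and} \qquad \|\Lambda(\lambda,\mu)^{-1}\| = \max\Big\{\tfrac{1}{|\lambda|},\tfrac{1}{|\mu|}\Big\}.$$
This is just the spectral formula for normal elements in $\M_m(\C)$, applied to the diagonal matrix and its inverse.

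Next, I would translate the two defining conditions of $\Omega$ into conditions on $\lambda$ and $\mu$. The inequality $\|\Lambda(\lambda,\mu)^{-1}\| < \kappa$ is equivalent to $|\lambda| > 1/\kappa$ and $|\mu| > 1/\kappa$; the second inequality is the hypothesis, so this condition reduces to $|\lambda| > 1/\kappa$. For the condition $\|\Lambda(\lambda,\mu)\|\cdot\|\Lambda(\lambda,\mu)^{-1}\| < c$, a short case distinction gives
$$\|\Lambda(\lambda,\mu)\|\cdot\|\Lambda(\lambda,\mu)^{-1}\| = \max\Big\{\frac{|\lambda|}{|\mu|},\frac{|\mu|}{|\lambda|}\Big\},$$
which is less than $c$ precisely when $\frac{|\mu|}{c} < |\lambda| < c|\mu|$.

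Combining the two requirements yields
$$\max\Big\{\frac{1}{\kappa},\frac{|\mu|}{c}\Big\} < |\lambda| < c|\mu|,$$
which is exactly \eqref{annulus-condition}. For the last assertion, specializing to $\mu = \lambda$ gives $\Lambda(\lambda,\lambda) = \lambda\1$ with $\|\lambda\1\|\cdot\|(\lambda\1)^{-1}\| = 1 < c$, so the only remaining condition is $\|(\lambda\1)^{-1}\| = 1/|\lambda| < \kappa$, i.e.\ $|\lambda| > 1/\kappa$. I expect no serious obstacle; the content of the lemma is just the explicit verification that the simple geometric picture of an annulus in $\C$ survives inside the matrix set $\Omega$ along these diagonal directions.
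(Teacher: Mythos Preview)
Your proposal is correct and follows essentially the same approach as the paper: both compute $\|\Lambda(\lambda,\mu)\|=\max\{|\lambda|,|\mu|\}$ and $\|\Lambda(\lambda,\mu)^{-1}\|=\max\{|\lambda|^{-1},|\mu|^{-1}\}$, then verify the two defining inequalities of $\Omega$ via a case distinction, and finally specialize to $\mu=\lambda$ for the last assertion. The only cosmetic difference is that you phrase the second condition as an equivalence while the paper merely checks sufficiency under \eqref{annulus-condition}.
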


\begin{proof}
Let $\mu\in\C\backslash\{0\}$ with $\frac{1}{\kappa}<|\mu|$ be given. For all $\lambda\in\C\backslash\{0\}$, which satisfy \eqref{annulus-condition}, we get
$$\|\Lambda(\lambda,\mu)^{-1}\| = \|\Lambda(\lambda^{-1},\mu^{-1})\| = \max\big\{|\lambda|^{-1},|\mu|^{-1}\big\} < \kappa.$$
and
\begin{eqnarray*}
\|\Lambda(\lambda,\mu)\|\cdot \|\Lambda(\lambda,\mu)^{-1}\|
&=& \max\big\{|\lambda|,|\mu|\big\}\cdot \max\big\{|\lambda|^{-1},|\mu|^{-1}\big\}\\
&=& \begin{cases}|\mu| |\lambda|^{-1}, & \text{if $|\lambda| < |\mu|$}\\ |\lambda||\mu|^{-1}, & \text{if $|\lambda| \geq |\mu|$}\end{cases}\\
&<& c,
\end{eqnarray*}
which implies $\Lambda(\lambda,\mu)\in\Omega$. In particular, for $\lambda\in\C\backslash\{0\}$ with $|\lambda|>\frac{1}{\kappa}$ we see that $\mu=\lambda$ fulfills \eqref{annulus-condition} and it follows $\lambda\1 = \Lambda(\lambda,\lambda) \in \Omega$.
\end{proof}

\subsection{Application to multivariate situation}

\subsubsection{Multivariate free central limit theorem}

Let $(x_i^{(k)})_{k=1}^d$, $i\in\N$, be free and identically distributed sets of $d$ self-adjoint non-zero random variables in some non-commutative $C^\ast$-probability space $(\c,\tau)$, with $\tau$ faithful, such that
$$\tau(x_i^{(k)}) = 0 \qquad\text{for $k=1,\dots,d$ and all $i\in\N$}$$
and
\begin{equation}\label{multi-bounded}
\sup_{i\in\N} \max_{k=1,\dots,d} \|x_i^{(k)}\| < \infty.
\end{equation}
We denote by $\Sigma =(\sigma_{k,l})_{k,l=1}^d$, where $\sigma_{k,l} := \tau(x_i^{(k)} x_i^{(l)})$, their joint covariance matrix. Moreover, we put
$$S_n^{(k)} := \frac{1}{\sqrt{n}} \sum^n_{i=1} x_i^{(k)} \qquad\text{for $k=1,\dots,d$ and all $n\in\N$}.$$
We know (cf. \cite{Speicher1}), that $(S_n^{(1)},\dots,S_n^{(d)})$ converges in distribution as $n\rightarrow\infty$ to a semicircular family $(s_1,\dots,s_d)$ of covariance $\Sigma$. For notational convenience we will assume that $s_1,\dots,s_d$ live also in $(\c,\tau)$; this can always be achieved by enlarging
$(\c,\tau)$.

Using Proposition 2.1 and Proposition 2.3 in \cite{Haagerup1}, for each polynomial $p$ of degree $g$ in $d$ non-commuting variables, we can find $m\in\N$ and $a_1,\dots,a_d \in  \M_m(\C)$ such that 
$$\lambda\1 - p(S_n^{(1)},\dots,S_n^{(d)})\qquad\text{and}\qquad \lambda\1 - p(s_1,\dots,s_d)$$ 
are invertible in $\c$ if and only if 
$$\Lambda(\lambda,1) - S_n\qquad \text{and} \qquad \Lambda(\lambda,1) - s,$$
respectively, 
are invertible in $\A=\M_m(\c)$. The matrices $\Lambda(\lambda,1)\in
\M_m(\C)$ were defined in Lemma \ref{annulus}, and $S_n$ and $s$ are defined as follows:
$$S_n := \sum^d_{k=1} a_k \otimes S^{(k)}_n \in\A \qquad\text{for all $n\in\N$}$$
and
$$s := \sum^d_{k=1} a_k \otimes s_k \in\A.$$
If we also put
$$X_i := \sum^d_{k=1} a_k \otimes x_i^{(k)} \in\A \qquad\text{for all $i\in\N$},$$
then we have
$$S_n = \frac{1}{\sqrt{n}} \sum^n_{i=1} X_i.$$
We note, that the sequence $(X_i)_{i\in\N}$ is $\ast$-free with respect to the conditional expectation $E:\A=\M_m(\c)\to\M_m(\C)$ and that all the $X_i$'s have the same $\ast$-distribution with respect to $E$ and that they satisfy $E[X_i]=0$. In addition, \eqref{multi-bounded} implies $\sup_{i\in\N} \|X_i\| < \infty$. Hence, the conditions of Theorem \ref{Berry-Esseen} are fulfilled. But before we apply it, we note that $(S_n)_{n\in\N}$ converges in distribution
(with respect to $E$) to $s$, which is an $\M_m(\C)$-valued semicircular element with covariance mapping
$$\eta:\ \M_m(\C) \rightarrow \M_m(\C),\ b\mapsto E[sbs],$$
which is given by
$$\eta(b) = E[sbs] = \sum^d_{k,l=1} \id\otimes\tau[(a_k\otimes s_k)(b\otimes\1)(a_l\otimes s_l)] = \sum^d_{k,l=1} a_kba_l \sigma_{k,l}.$$
Now, we get from Theorem \ref{Berry-Esseen} constants $\kappa^\ast>0$, $c^\ast>0$ and $C'>0$ and $N\in\N$ such that we have for the difference of the operator-valued Cauchy transforms
$$G_s(b):=E[(b-s)^{-1}]\qquad\text{and}\qquad G_{S_n}(b):=E[(b-S_n)^{-1}]$$
the estimate
$$\|{G}_{s}(b) - {G}_{S_n}(b)\| \leq C' \frac{1}{\sqrt{n}}\|b\| \qquad\text{for all $b\in \Omega^\ast$ and $n\geq N$},$$
where we put
$$\Omega^\ast := \Big\{b\in\GL_m(\C)\mid \|b^{-1}\| < \kappa^\ast,\ \|b\|\cdot\|b^{-1}\| < c^\ast\Big\}.$$

Moreover, Proposition 2.3 in \cite{Haagerup1} tells us
$$\big(\lambda\1 - p(S_n^{(1)},\dots,S_n^{(d)})\big)^{-1} = (\pi\otimes\id_\c)\big((\Lambda(\lambda,1) - S_n)^{-1}\big)$$
and
$$\big(\lambda\1 - p(s_1,\dots,s_d)\big)^{-1} = (\pi\otimes\id_{\c'})\big((\Lambda(\lambda,1) - s)^{-1}\big),$$
where $\pi: \M_m(\C)\rightarrow\C$ is the mapping given by $\pi((a_{i,j})_{i,j=1,\dots,m}) := a_{1,1}$. Since
$\tau\circ(\pi\otimes\id_\c) = \pi \circ E$,
this implies a direct connection between the operator-valued Cauchy transforms of $S_n$ and $s$ and the scalar-valued Cauchy transforms of $P_n:=p(S_n^{(1)},\dots,S_n^{(d)})$ and $P:=p(s_1,\dots,s_d)$, respectively. To be more precise, we get
$$G_{P_n}(\lambda) :=\tau[(\lambda-P_n)^{-1}]=
 \pi\big({G}_{S_n}(\Lambda(\lambda,1))\big)$$
and
$$ G_P(\lambda):=\tau[(\lambda-P)^{-1}] = \pi\big({G}_s(\Lambda(\lambda,1))\big)$$
for all $\lambda\in\rho_\c(P_n)$ and $\lambda\in\rho_{\c}(P)$, respectively.

If we choose $\mu\in\C$ such that $|\mu|>\frac{1}{\kappa^\ast}$ holds, it follows from Lemma \ref{annulus}, that $\Lambda(\lambda,\mu) \in \Omega^\ast$ is fulfilled for all $\lambda\in A(\mu)$, where $A(\mu)\subseteq\C$ denotes the open set of all $\lambda\in\C$ satisfying \eqref{annulus-condition}, i.e.
$$A(\mu) := \Big\{\lambda\in\C\mid \max\Big\{\frac{1}{\kappa^\ast}, \frac{|\mu|}{c^\ast}\Big\} < |\lambda| < c^\ast|\mu|\Big\}.$$

If we apply Proposition 2.1 and Proposition 2.2 in \cite{Haagerup1} to the polynomial $\frac{1}{\mu^g} p$ (which corresponds to the operators $\frac{1}{\mu} S_n$ and $\frac{1}{\mu} S$), we easily deduce that $$\lambda\1 - \frac{1}{\mu^{g-1}}p(S_n^{(1)},\dots,S_n^{(d)})\qquad\text{and}\qquad \lambda\1 - \frac{1}{\mu^{g-1}}p(s_1,\dots,s_d)$$ are invertible in $\c$ if and only if $$\Lambda(\lambda,\mu) - S_n\qquad\text{and}\qquad \Lambda(\lambda,\mu) - S_n,$$
respectively, 
are invertible in $\A$. Moreover, we have
$$\mu^{g-1} G_{P_n}(\lambda\mu^{g-1}) = \pi\big({G}_{S_n}(\Lambda(\lambda,\mu))\big)$$
and
$$ \mu^{g-1} G_P(\lambda\mu^{g-1}) = \pi\big({G}_s(\Lambda(\lambda,\mu))\big)$$
for all $\lambda\in\rho_\c(\frac{1}{\mu^{g-1}} P_n)$ and $\lambda\in\rho_\c(\frac{1}{\mu^{g-1}} P)$, respectively.

Particularly, for all $\lambda\in A(\mu)$ we get $\Lambda(\lambda,\mu)\in\Omega^\ast$ and hence $\lambda\in\rho_\c(\frac{1}{\mu^{g-1}} P_n) \cap \rho_\c(\frac{1}{\mu^{g-1}} P)$ for all $n\geq N$. Therefore, Theorem \ref{Berry-Esseen} implies
\begin{eqnarray*}
|\mu|^{g-1}|G_P(\lambda\mu^{g-1}) - G_{P_n}(\lambda\mu^{g-1})|
&=& \big|\pi\big({G}_s(\Lambda(\lambda,\mu)) - {G}_{S_n}(\Lambda(\lambda,\mu))\big)\big|\\
&\leq& \big\|{G}_s(\Lambda(\lambda,\mu)) - {G}_{S_n}(\Lambda(\lambda,\mu))\big\|\\
&\leq& C' \frac{1}{\sqrt{n}} \|\Lambda(\lambda,\mu)\|\\
&\leq& C' \frac{1}{\sqrt{n}} \max\{|\lambda|,|\mu|\}\\
&\leq& C'c^\ast |\lambda| \frac{1}{\sqrt{n}}
\end{eqnarray*}
and hence
$$|G_P(\lambda\mu^{g-1}) - G_{P_n}(\lambda\mu^{g-1})| \leq  C'c^\ast \frac{1}{\sqrt{n}} |\lambda \mu^{g-1}|.$$
This means, that
$$|G_P(z) - G_{P_n}(z)| \leq C'c^\ast \frac{1}{\sqrt{n}} |z|$$
holds for all $z\in\C$ with $\frac{z}{\mu^{g-1}} \in A(\mu)$ and all $n\geq N$. By definition of $A(\mu)$, we particularly get
$$|G_P(z) - G_{P_n}(z)| \leq C \frac{1}{\sqrt{n}} \qquad\text{for all $\frac{1}{c^\ast}|\mu|^g < |z| < c^\ast|\mu|^g$ and $n\geq N$},$$
where we put $C := C'(c^\ast)^2|\mu|^g > 0$. Since $z\mapsto G_P(z) - G_{P_n}(z)$ is holomorphic on $\{z\in\C\mid |z| > R\}$ for $R:=\frac{1}{c^\ast}|\mu|^g>0$ and extends holomorphically to $\infty$, the maximum modulus principle gives
$$|G_P(z) - G_{P_n}(z)| \leq C \frac{1}{\sqrt{n}} \qquad\text{for all $|z|>R$ and $n\geq N$}.$$
This shows Theorem \ref{Berry-Esseen-multivariate}.

\subsubsection{Estimates in terms of the Kolmogorov distance}

In the classical case, estimates between scalar-valued Cauchy transfoms can be established (for self-adjoint operators) in all of the upper complex plane and lead then to estimates in terms of the Kolmogorov distance. In the case treated above, we have a statement about the behavior of the difference between two Cauchy transforms only near infinity. Even in the case, where our operators are self-adjoint, we still have to transport estimates from infinity to the real line, and hence we can not apply the results of Bai \cite{Bai} directly. A partial solution to this problem was given in the appendix of \cite{Speicher4} with the following theorem, formulated in terms of probability measures instead of operators. There we use the notation $G_\mu$ for the Cauchy
transform of the measure  $\mu$, and put
$$D^+_R := \{z\in\C\mid \Im(z)>0,\ |z|>R\}.$$

\begin{theorem}\label{thm:4.8}
Let $\mu$ be a probability measure with compact support contained in an interval $[-A,A]$ such that the cumulative distribution function $\F_\mu$ satisfies
\begin{equation*}
|\F_\mu(x+t) - \F_\mu(x)| \leq \rho |t| \qquad\text{for all $x,t\in\R$}
\end{equation*}
for some constant $\rho>0$. Then for all $R>0$ and $\beta\in(0,1)$ we can find $\Theta>0$ and $m_0>0$ such that for any probability measure $\nu$ with compact support contained in $[-A,A]$, which satisfies
$$\sup_{z\in D_R^+} |G_\mu(z)-G_\nu(z)| \leq e^{-m}$$
for some $m>m_0$, the Kolmogorov distance $\displaystyle{\Delta(\mu,\nu):=\sup_{x\in\R} |\F_\mu(x)-\F_\nu(x)|}$ fulfills
$$\Delta(\mu,\nu)\leq \Theta\frac{1}{m^\beta}.$$
\end{theorem}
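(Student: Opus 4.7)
Set $f(z) := G_\mu(z) - G_\nu(z)$. This is holomorphic on $\C \setminus [-A,A]$, obeys the hypothesis $|f(z)| \leq e^{-m}$ on $D_R^+$, and satisfies the trivial Cauchy bound $|f(z)| \leq 2/\Im(z)$ on all of $\C^+$. The strategy is three-step: (i) transport the exponential bound from $D_R^+$ down to a horizontal line $\{\Im z = y\}$ with $y \ll R$; (ii) convert a bound on that line into a bound on $\Delta(\mu,\nu)$ via a Bai-type inequality adapted to the Lipschitz case; (iii) optimize the choice of $y = y(m)$.

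For step (i) I would fix $y_0 > 0$ small and work on the half-plane $H_{y_0} := \{z : \Im z > y_0\}$. On its boundary line $\Im z = y_0$ the bound $|f| \leq e^{-m}$ already holds whenever $|x| > \sqrt{R^2 - y_0^2}$ (since then $|z| > R$), while on the central segment $I_{y_0} := \{x + i y_0 : |x| \leq R\}$ only the trivial bound $|f| \leq 2/y_0$ is available; additionally $|f(z)| \to 0$ as $|z| \to \infty$ in $\C^+$. A Phragm\'en-Lindel\"of / two-constants argument then gives
\begin{equation*}
|f(z)| \leq (2/y_0)^{\omega(z)}\, e^{-m(1-\omega(z))},
\end{equation*}
where $\omega(z)$ is the harmonic measure of $I_{y_0}$ at $z$ in $H_{y_0}$, explicitly computable via the half-plane Poisson kernel as a sum of two arctangents. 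Setting $y_0 = y/2$ and restricting to $z = x + iy$ with $|x| \leq A$, a direct estimate of those arctangents gives $1 - \omega(z) \geq c y / R$ for a constant $c = c(A,R) > 0$, hence
\begin{equation*}
|f(x+iy)| \leq (4/y)\, e^{-cmy/R} \qquad\text{uniformly for } |x| \leq A.
\end{equation*}

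For step (ii) I would invoke the Bai-type bound recorded in the appendix of \cite{Speicher4}: since $\F_\mu$ is $\rho$-Lipschitz and both measures are supported in $[-A, A]$, Poisson smoothing at height $y$ together with the pointwise bound $|\F_\mu(x-t) - \F_\mu(x)| \leq \min(\rho|t|, 1)$ yields
\begin{equation*}
\Delta(\mu,\nu) \leq \frac{1}{\pi}\int_{-A'}^{A'} |f(x+iy)|\,dx + C_\rho\, y \log(1/y),
\end{equation*}
for some $A' > A$ and $C_\rho$ depending only on $\rho$ and $A$; the integral controls the distance between the two Poisson-regularized CDFs, while the $y\log(1/y)$ term bounds the smoothing error. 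Plugging in the estimate from (i) and choosing $y := K R \log(m)/m$ with $K = K(\beta)$ so large that $cK > 1$, the first term becomes $O(m^{1-cK}/\log m)$ and the second $O((\log m)^2/m)$. Since $(\log m)^2/m = o(m^{-\beta})$ for every $\beta \in (0,1)$, for $m$ large enough both terms are at most $\tfrac{1}{2}\Theta/m^\beta$, which gives the theorem for suitable $\Theta = \Theta(\beta, R, \rho, A)$ and $m_0 = m_0(\beta, R, \rho, A)$.

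The main obstacle is step (ii): the standard Bai inequality in \cite{Bai} uses a lower density bound on the reference measure, whereas here only the upper Lipschitz bound on $\F_\mu$ is available, which is precisely what forces the logarithmic correction $y\log(1/y)$ in the smoothing step. The harmonic-measure transfer in step (i) is routine once the geometry is set up, and the final optimization is a standard balancing of an exponentially small term against a polynomially small one.
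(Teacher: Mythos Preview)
The paper does not prove Theorem~\ref{thm:4.8}; it is only quoted from the appendix of \cite{Speicher4}, so there is no in-paper argument to compare your proposal against. That said, your three-step plan --- a two-constants/harmonic-measure interpolation to carry the bound from $D_R^+$ down to a line $\{\Im z=y\}$, a Bai-type smoothing inequality using only the Lipschitz regularity of $\F_\mu$, and a final optimization in $y$ --- is exactly the mechanism behind the cited result, and the outline is sound.

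A few small points to tighten. First, make explicit that one may assume $R>A$ (replacing $R$ by $\max(R,2A)$ only shrinks $D_R^+$ and hence weakens the hypothesis), since your harmonic-measure lower bound $1-\omega(x+iy)\ge c\,y$ for $|x|\le A'$ needs $A'<R$. Second, in step~(iii) the condition ``$cK>1$'' is not sufficient; you need $cK>1+\beta$ to force $m^{1-cK}/\log m=o(m^{-\beta})$. Third, invoking ``the Bai-type bound recorded in the appendix of \cite{Speicher4}'' is circular, since that appendix is precisely where the present theorem lives; cite Bai's original smoothing lemma \cite{Bai} instead. In fact Bai's formulation gives the sharper remainder $O(y)$ rather than your $O(y\log(1/y))$, because his second term is $\tfrac{1}{y}\sup_x\int_{|s|\le 2ay}|\F_\mu(x+s)-\F_\mu(x)|\,ds = O(\rho y)$ under the Lipschitz hypothesis --- but either version suffices for the stated conclusion.
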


Obviously, this leads to the following questions: First, the stated estimate for the speed of convergence in terms of the Kolmogorov distance is far from the expected one. We hope to improve this result in a future work. Furthermore, in order to apply this theorem, we have to ensure that $p(s_1,\dots,s_d)$ has a continuous density. As mentioned in the introduction, it is a still unsolved problem, whether this is always true for any self-adjoint polynomials $p$. 

\section*{Acknowledgment}
This project was initiated by disussions with Friedrich G\"otze during
the visit of the second author at the University of Bielefeld in November 2006. He thanks the Department of
Mathematics and in particular the SFB 701 for its generous hospitality and Friedrich
G\"otze for the invitation and many interesting discussions.
A preliminary version of this paper appeared as preprint \cite{Speicher5}
of SFB 701.

The second author also thanks Uffe Haagerup for pointing out how ideas from \cite{Haagerup1} can be used to
improve the results from an earlier version of this paper.


\begin{thebibliography}{999}

\bibitem{Bai} Z. D. Bai, \textit{Convergence rate of expected spectral distributions of large-dimensional random matrices: Part I. Wigner matrices}, Ann. Probab. 21 (1993), 625-648

\bibitem{Goetze} G. P. Chistyakov and F. G\"otze, \textit{Limit theorems in free probability theory I}, Ann. Probab., vol. 1, no. 1 (2008), 54-90

\bibitem{Earle} C. J. Earle and R. S. Hamilton, \textit{A fixed point theorem for holomorphic mappings}, Global Analysis (Proc. Sympos. Pure Math., Vol. XVI, Berkeley, Calif., 1968), American Mathematical Society, Rhode Island, 1970, pp. 61-65

\bibitem{GT}
F. G\"otze and A. Tikhomirov, \textit{Limit theorems for spectra of random matrices with martingale
structure}.  Stein's method and applications, 181-193, Lect. Notes Ser. Inst. Math. Sci.
Natl. Univ. Singap., 5, Singapore Univ. Press, Singapore, 2005

\bibitem{Haagerup1} U. Haagerup, H. Schultz and S. Thorbj\o rnsen, \textit{A random matrix approach to the lack of projections in $C^\ast_{\operatorname{red}}(\mathbb{F}_2)$},  Adv. Math. 204 (2006), 1-83

\bibitem{Haagerup2} U. Haagerup and S. Thorbj\o rnsen, \textit{A new application of Random Matrices: $\operatorname{Ext}(C^\ast_{\operatorname{red}}(F_2))$ is not a group}, Annals of Mathematics 162 (2005)

\bibitem{Harris} L. A. Harris, \textit{Fixed Points of Holomorphic Mappings for Domains in Banach Spaces}, Abstract and Applied Analysis 2003, no. 5 (2003), 261-274

\bibitem{Speicher3} J. W. Helton, R. R. Far and R. Speicher, \textit{Operator-valued Semicircular Elements: Solving A Quadratic Matrix Equation with Positivity Constraints}, International Mathematical Research Notices (2007)

\bibitem{Junge} M. Junge, \textit{Embedding of the operator space OH and the logarithmic `little Grothendieck inequality'}, Invent. math. 161 (2005), 225-286

\bibitem{Kargin} V. Kargin, \textit{Berry-Esseen for free random variables}, J. Theor. Probab. 20 (2007), 381-395

\bibitem{Speicher1} R. Speicher, \textit{A new example of independence and white noise}, Prob. Th. Rel. Fields 84 (1990), 141-159

\bibitem{Speicher2} R. Speicher, \textit{Combinatorial theory of the free product with amalgamation and operator-valued free probability theory}, Mem. Amer. Math. Soc., vol. 132, no. 627, pp. x+88, 1998

\bibitem{Speicher4} R. Speicher and C. Vargas, \textit{Free deterministic equivalents, rectangular random matrix models and operator-valued free probability theory}, Random Matrices: Theory and Applications, Vol. 1, No. 1 (2012)

\bibitem{Speicher5} R. Speicher, \textit{On the rate of convergence and Berry-Esseen type theorems for a multivariate free central limit theorem}, SFB 701, Preprint, 2007

\bibitem{Voiculescu} D. Voiculescu, \textit{Operations on certain non-commutative operator-valued random variables}, Ast\'erisque, no. 232, pp 243-275, 1995, recent advances in operator algebras (Orl\'eans, 1992)

\end{thebibliography}
\end{document}